\documentclass[12pt]{amsart}
\usepackage{amsmath, amsthm, amscd, amssymb, amsfonts, latexsym, mathtools}
\usepackage{fullpage}
\usepackage{bm,mathdots}
\usepackage{dsfont}
\usepackage[all]{xy}
\usepackage{tikz}
\usepackage{float}
\usepackage{fancybox}
\usepackage{ytableau}
\usepackage{tikz,forest}
\usetikzlibrary{patterns}
\usetikzlibrary{arrows.meta}
\useforestlibrary{edges}
\usepackage[colorlinks=true, urlcolor=blue]{hyperref}
\usepackage{url}
\usepackage{longtable}
\usepackage{array}
\usepackage{xcolor}
\usepackage{todonotes}

\usepackage{cleveref}

\newcounter{rownum}
\newcommand{\rownumber}{\stepcounter{rownum}\therownum}

\usepackage{booktabs}
\usepackage{caption}
\usepackage{multicol}
\usepackage{multirow}
\numberwithin{equation}{section}

\newenvironment{red}{\relax\color{red}}{\relax}
\newenvironment{blue}{\relax\color{blue}}{\hspace*{.5ex}\relax}
\newenvironment{jaune}{\relax\color{magenta}}{\relax}
\newcommand{\ber}{\begin{red}}
\newcommand{\er}{\end{red}}
\newcommand{\beb}{\begin{blue}}
\newcommand{\eb}{\end{blue}}
\newcommand{\bej}{\begin{jaune}}
\newcommand{\ej}{\end{jaune}}

\newcommand{\kommentar}[1]{}

\newcommand{\F}{\mathbb F}

\newcommand{\Z}{\mathbb Z}
\newcommand{\Q}{\mathbb Q}

\DeclareMathOperator{\sgn}{sgn}

\renewcommand{\pmod}[1]{\,(\mathrm{mod}\,#1)}

\newtheorem{lem}{Lemma}[section]

\newtheorem{thm}[lem]{Theorem}

\newtheorem{cor}[lem]{Corollary}

\theoremstyle{definition}
\newtheorem{example}[lem]{Example}

\newtheorem{rem}[lem]{Remark}

\begin{document}

\title{Decision trees, Frobenius traces, and Weierstrass coefficients of elliptic curves}

\date{\today}

\author[B.S. Banwait]{Barinder S. Banwait}
\address{London, UK}
\email{\href{mailto:barinder.s.banwait@gmail.com}{barinder.s.banwait@gmail.com}}

\author[X. Huang]{Xiaoyu Huang}
\address{Department of Mathematics, Temple University, Philadelphia, PA, 19122, USA}
\email{\href{mailto:xiaoyu.huang@temple.edu}{xiaoyu.huang@temple.edu}}

\author[K.-H. Lee]{Kyu-Hwan Lee}%^{\star}$}
\address{Department of Mathematics, University of Connecticut, Storrs, CT 06269, USA  \hfill \break \indent Korea Institute for Advanced      Study, Seoul 02455, Republic of Korea}
\email{\href{mailto:khlee@math.uconn.edu}{khlee@math.uconn.edu}}

\author[S. Lee]{Seewoo Lee}
\address{Department of Mathematics, University of California, Berkeley, CA 94720, USA}
\email{\href{mailto:seewoo5@berkeley.edu}{seewoo5@berkeley.edu}}

\author[T. Oliver]{Thomas Oliver}
\address{University of Westminster, London, UK}
\email{\href{mailto:T.Oliver@westminster.ac.uk}{T.Oliver@westminster.ac.uk}}

\author[A. Pozdnyakov]{Alexey Pozdnyakov}
\address{Department of Mathematics, Princeton University, Princeton, NJ, 08544-1000, USA}
\email{\href{mailto:ap5763@princeton.edu}{ap5763@princeton.edu}}

\begin{abstract}
We investigate the extent to which the reduced minimal Weierstrass coefficients of an elliptic curve over $\mathbb{Q}$ may be computed from it's Frobenius traces.
Decision tree models reveal that the first two reduced minimal Weierstrass coefficients can be recovered with perfect accuracy from the Frobenius traces at the primes $2$ and $3$, and the third by supplementing these two traces with the conductor parity. 
We subsequently prove explicit formulae for these coefficients using the Frobenius traces and conductor parity. 
These formulae appear to be new.
In particular, we deduce that the first three reduced minimal Weierstrass coefficients of an elliptic curve are determined by its isogeny class.
\end{abstract}

\maketitle

\section{Introduction}

Let $E/\Q$ be an elliptic curve given by a Weierstrass model
\begin{equation}\label{eq.Weierstrass}
y^2+w_1xy+w_3y=x^3+w_2x^2+w_4x+w_6.
\end{equation}
We write the coefficients as $(w_1,w_2,w_3,w_4,w_6)$ rather than the conventional $(a_1,a_2,a_3,a_4,a_6)$ to avoid a clash with the Frobenius traces $a_p(E)$.
Among all Weierstrass models for $E$, those with $|\Delta|$ minimal are called minimal, where $\Delta$ is the discriminant \cite[III.1]{silverman}. 
Over $\Q$ there is moreover a unique \emph{reduced minimal model}, singled out by the normalisation
\begin{equation}\label{eq.reducedW}
w_1,w_3\in\{0,1\}, \qquad w_2\in\{-1,0,1\},
\end{equation}
see, for example, \cite[§3.1]{cremona1997algorithms}. 
The coefficients $w_1,w_2,w_3$ in the reduced minimal model thus take only finitely many values, fixed by what looks like an arbitrary bookkeeping convention. 
Our main result is that these coefficients, which are isogeny invariants, are recovered through strikingly simple formulae involving only the Frobenius traces $a_2(E)$ and $a_3(E)$ and, in the case of $w_3$, the conductor parity. 
In fact, the residues $a_2(E)\bmod 2$ and $a_3(E)\bmod 3$ are enough to determine $w_1$ and $w_2$, and $w_3$ may be determined by also including the conductor parity and, in the case that $a_2(E)\equiv1\bmod 2$, the value $(a_2(E)+1)/2$ (which may be deduced from the sign $\sgn(a_2(E))$). 
These formulae have hiterto been unobserved. 
To state our formulae compactly, we write $[m]_n\in\{0,1,\dots,n-1\}$ for the residue of $m$ modulo $n$.

\begin{thm}\label{cor.w123isogeny}
Let $E/\Q$ be an elliptic curve with reduced minimal Weierstrass coefficients $w_i$ ($i=1,2,3,4,6$). Then
\[w_1=[a_2(E)]_2,\]
\[w_2=[a_3(E)+1-w_1]_3-1,\]
\[    w_3 = (1 - w_1) [N(E)]_2 + w_1 \left[ N(E) + 1 + w_2 + (a_2(E) + 1)/2 \right]_2.\]
In particular, $w_1$, $w_2$ and $w_3$ depend only on the isogeny class of $E$.
\end{thm}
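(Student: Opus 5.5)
The plan is to use two ingredients: the congruences that pin down the reduced minimal model, and a point count of the reduction of that model over $\F_2$ and $\F_3$.

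\emph{Normalisation.} Recall the normalisation of \cite[§3.1]{cremona1997algorithms}. From $b_2=w_1^2+4w_2$ and $b_6=w_3^2+4w_6$ we read off
\[
w_1\equiv b_2 \pmod 2,\qquad w_3\equiv b_6\pmod 2,\qquad w_2\equiv b_2-w_1\pmod 3,
\]
the last because $4\equiv 1\pmod 3$ and $w_1^2=w_1$. Since $w_2\in\{-1,0,1\}$, it suffices to show $b_2\equiv a_3(E)\pmod 3$; then $w_2=[a_3(E)-w_1+1]_3-1$.

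\emph{Point counts.} Throughout, I use that for every prime $p$ the reduced minimal model is minimal at $p$. I also use $a_p(E)=p+1-\#\tilde E(\F_p)$, where $\tilde E$ is the reduction of the minimal model and the singular point is counted if there is one. This holds for good, multiplicative and additive reduction alike, and it is the only way the bad-reduction cases enter.

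\emph{Step 1: $w_1$.} I reduce the model mod $2$ and use $\partial/\partial y=w_1x+w_3$.

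If $w_1=0$, then in characteristic $2$ either the curve is smooth with $j=0$, hence supersingular and $a_2$ even, or the reduction is singular. In the singular case the reduction is additive: a node would need a nonzero quadratic part, i.e. $b_2\not\equiv0$. So $a_2=0$.

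If $w_1=1$, a good reduction is ordinary, so $a_2$ is odd. A singular reduction is a node, because $c_4\equiv b_2^2\equiv1$, so $a_2=\pm1$.

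Hence $w_1=[a_2(E)]_2$.

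\emph{Step 2: $w_2$.} I complete the square over $\F_3$ to get $y^2=f(x)=x^3+\tfrac{b_2}{4}x^2+\cdots$. Then
\[
\#\tilde E(\F_3)=4+\sum_{x\in\F_3}\chi(f(x)) \equiv 1-[x^2]f \pmod 3,
\]
using $\sum_x x^k\equiv -1$ or $0$ according as $2\mid k$, $k>0$, or not. This gives $a_3\equiv b_2/4\equiv b_2\pmod 3$, as needed.

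\emph{Step 3: $w_3$.} The case $w_1=0$ is immediate. The reduction mod $2$ is smooth iff $w_3$ is odd, since otherwise both partials in $y$ vanish identically and one gets a singular point. So $w_3=[N(E)]_2$.

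For $w_1=1$, I substitute $x\mapsto x+w_3$ over $\F_2$. This kills the $y$-coefficient, giving $y^2+xy=x^3+Ax^2+Bx+C$ with $A\equiv w_2+w_3\pmod 2$. Counting points: $x=0$ gives one point; $x=1$ gives two points iff $1+A+B+C$ is even, and none otherwise; and there is the point at infinity. Hence $a_2=-1$ iff $1+A+B+C$ is even, i.e.
\[
(a_2+1)/2\equiv 1+A+B+C \pmod 2.
\]
The singular point must have $x=0$ and $y=B$, so it exists iff $B\equiv C$. Thus $B+C\equiv N(E)\pmod 2$, using that $2\mid N$ iff the reduction is bad. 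Combining,
\[
w_3\equiv N(E)+1+w_2+(a_2+1)/2 \pmod 2,
\]
which is the stated formula.

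\emph{Isogeny invariance.} This follows at once, since $a_p(E)$ and $N(E)$ are isogeny invariants.

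The main obstacle is Step 3. One has to keep careful track of the change of variables modulo $2$, which alters $w_2,w_4,w_6$, and make sure only invariant parities are used. One also has to check that the point-count formula remains valid when the reduction at $2$ is multiplicative.
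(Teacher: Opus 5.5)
Your proof is correct, but it verifies the formulas structurally, whereas the paper verifies them by enumeration. The paper uses the same two inputs as you: the count $a_p(E)=p+1-\#\tilde E(\F_p)$ for a $p$-minimal model with the singular point included, and $2\mid N(E)$ iff the reduction mod $2$ is singular. It then runs through all $32$ residue classes of the reduced model mod $2$ and all $108$ mod $3$, checking each formula row by row.

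You replace those tables by three facts:
\begin{enumerate}
\item $c_4\equiv b_2^2\equiv w_1 \pmod 2$, so $w_1$ detects ordinary versus supersingular and node versus cusp. This proves Corollary~\ref{cor.w1redn} directly.
\item The congruence $a_3(E)\equiv b_2\pmod 3$, the $p=3$ case of the Hasse-invariant congruence. Your Euler-criterion computation gives it for singular reductions too, and together with $b_2=w_1^2+4w_2$ it is exactly the $w_2$ formula.
\item The translation $x\mapsto x+w_3$. Afterwards $(a_2(E)+1)/2$ and $[N(E)]_2$ are the parities of $1+A+B+C$ and $B+C$, so with $A\equiv w_2+w_3$ the $w_3$ formula becomes a one-line parity identity.
\end{enumerate}
The paper's route is mechanical and needs nothing beyond its two lemmas, but it gives no reason for the formulas and rests on every table row being correct. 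Yours explains their shape, for instance why $w_2$ enters the $w_3$ formula only through its parity and why $(a_2(E)+1)/2$ appears.

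Two small tidy-ups. First, Step 1 needs neither supersingularity nor the node/cusp criterion, whose phrasing via a ``nonzero quadratic part'' is loose. The same count as in Step 3 does the job: when $w_1=0$, each $x$ gives one point if $w_3=0$ and $0$ or $2$ points if $w_3=1$, so $\#\tilde E(\F_2)$ is odd; when $w_1=1$, it lies in $\{2,4\}$. Second, in Step 3 with $w_1=w_3=0$ you should exhibit the singular point. Take $x_0=\tilde w_4$ and choose $y_0$ with $y_0^2$ equal to the right-hand side at $x_0$; this is solvable because every element of $\F_2$ is a square.
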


These formulae were first observed by applying decision trees to the elliptic curve database in the $L$-functions and Modular Forms Database (LMFDB) \cite{lmfdb}. 
In the case of $w_1$ and $w_2$, the formulae could easily be observed by inspecting the relevant decision tree and verified with brute force computation.
In the case of $w_3$, a large language model was utilised to suggest expressions similar to those for $w_1$ and $w_2$, see Remark~\ref{r.llm}.
The suggested formula for $w_3$ was subsequently verified using decision trees and brute force computation.
These results sit within a growing body of work bringing machine learning to number theory: classical algorithms have been applied to elliptic curves in \cite{ABH,HLOst,HLOac,HLOP,big-group-3} and deep learning in \cite{BCC+,hloq, babei2025machine}, with further applications to Galois groups \cite{HLOnf,LL}, class groups \cite{AHLOS}, Fricke signs \cite{big-group-1}, and vanishing orders of rational $L$-functions \cite{big-group-2}.

\Cref{s.background} reviews the necessary background. \Cref{s.proof} sketches the proof of \Cref{cor.w123isogeny}, explains its connection with decision trees, and characterises $w_1,w_2,w_3$ geometrically; the full proof is deferred to \Cref{a.tables}. Finally, \Cref{a.transformers} revisits the problem using transformer architectures and provides a complementary machine-learning perspective.

\subsection*{Acknowledgments}
This work grew out of discussions at the \emph{Machine Learning and Mathematics} conference held at the Korea Institute for Advanced Study (KIAS) in 2025, and was concluded during the \emph{AI and Mathematics} program at KIAS in 2026. The authors are grateful to the organizers for their hospitality and support. The authors also made use of ChatGPT and Claude in order to encode and decode decision tree models. ChatGPT 5.2-Pro was used to help identify the formula for $w_2$ based on the decision-tree experiments, while Claude Code (with Claude Opus 4.8) assisted in identifying the formula for $w_3$ (see Remarks~\ref{rem:w2chatgpt} and~\ref{r.llm} for details). In addition, Claude Code (with Claude Opus 4.8) and Codex (with ChatGPT 5.6-Sol) were used as coding assistants during the development of the Python/SageMath code.

\section{Weierstrass equations and Frobenius traces}\label{s.background}
In this preliminary section, we explain the relevant background theory using illustrative examples.

\begin{example}
The smallest possible value of $|\Delta|$ for an elliptic curve over $\mathbb{Q}$ is $11$.
In particular, the following reduced minimal Weierstrass equation has $\Delta = -11$:
\begin{equation}\label{eq.11a1}
y^2+y=x^3-x^2-7820x-263580.
\end{equation}
The curve in equation~\eqref{eq.11a1} is called \texttt{11\!.\!a1} in \cite{lmfdb}, and will appear again in later examples.
\end{example}
If $p$ is a prime integer and $E$ is an elliptic curve defined over $\mathbb{Q}$, then the reduction of $E$ at $p$ is the curve over $\mathbb{F}_p$ determined by reducing a minimal Weierstrass equation for $E$ modulo $p$.
We say that $p$ is a bad prime for $E$ if the reduction at $p$ is singular, in which case, there are two possible reduction types, namely, cuspidal (additive) and nodal (multiplicative).
In the nodal case, we say that the multiplicative reduction is split (resp. non-split) if the two tangents are (resp. are not) defined over $\mathbb{F}_p$.
The conductor $N(E)$ of an elliptic curve $E$ defined over $\mathbb{Q}$ is an integer divisible precisely by the primes of bad reduction, with the exponent at $p$ determined by the reduction type \cite[VIII.11,~C.16]{silverman}.
Szpiro's conjecture implies a bound on $|\Delta|$ in terms of $N(E)$ \cite[VIII.11]{silverman}.
\begin{example}
In some cases, the conductor may be equal to the absolute value of the discriminant. For example, the elliptic curve \texttt{11\!.\!a1} in equation~\eqref{eq.11a1} has conductor $11$.
On the other hand, these two quantities can be quite different.
For example, the following elliptic curve has conductor $11$ and discriminant $-11^5=-161051$:
\begin{equation}\label{eq.11a2}
y^2+y=x^3-x^2-10x-20.
\end{equation}
The curve in equation~\eqref{eq.11a2} is called \texttt{11\!.\!a2} on \cite{lmfdb}.
\end{example}
If $E$ is an elliptic curve defined over $\mathbb{Q}$, and $p$ is a good prime for $E$, the Frobenius trace of $E$ at $p$ is given by
\begin{equation}\label{eq.Frobenius}
a_p(E)=p+1-\#E(\mathbb{F}_p),
\end{equation}
where $\# E(\mathbb{F}_p)$ denotes the number of projective points for $E$ over $\mathbb{F}_p$.
When $p$ is a bad prime, we have $a_p(E)\in\{-1,0,1\}$ depending on the reduction type of $E$ at $p$.
For good primes we have Hasse's theorem that $|a_p(E)|\leq2\sqrt{p}$ \cite[V.1]{silverman}.
Given an elliptic curve, it is possible to compute the sequence of Frobenius traces by counting points on its reduction modulo primes.
Two elliptic curves over $\mathbb{Q}$ are isogenous if the corresponding sequences of Frobenius traces are equal at all good primes.
\begin{example}
The curves \texttt{11\!.\!a1} and \texttt{11\!.\!a2} in equations~\eqref{eq.11a1} and~\eqref{eq.11a2} are isogenous. The sequence of Frobenius traces begins: $a_2(E)=-2,~a_3(E)=-1,~a_5(E)=1,~a_7(E)=-2,~a_{11}(E)=1,~a_{13}(E)=4,~\dots$
\end{example}
Roughly speaking, we are interested in the inverse problem of constructing a reduced minimal Weierstrass equation from a finite sequence of Frobenius traces.
This problem is not well-posed for a number of reasons, for example, isogenous curves have the same Frobenius traces, and even non-isogenous curves can still agree on a finite list of Frobenius traces.
Nevertheless, we will show that $w_1,w_2,w_3$ depend only on 
%the isogeny class, and, in fact, only on 
$[a_2(E)]_2$, $[a_3(E)]_3$, $[N(E)]_2$, and, in the case that $[a_2(E)]_2=1$, the value $(a_2(E)+1)/2$, which is determined by $\mathrm{sgn}(a_2(E))$. 
The isogeneous curves \texttt{11\!.\!a1} and \texttt{11\!.\!a2} given by equations~\eqref{eq.11a1} and~\eqref{eq.11a2} demonstrate that $w_4$ and $w_6$ are not determined by the isogeny class alone.

The following two straightforward Lemmas will aid the subsequent exposition.
In order to state these Lemmas, we will need the notion of $p$-minimiality. 
We say that a Weierstrass equation is minimal at $p$ if all its coefficients are $p$-adic integers and, among all such equations, it achieves the minimal value of $v_p(\Delta)$.

\begin{lem}\label{lem.a2count}
Let $E/\Q$ be an elliptic curve, let $p$ be a prime, and let $\tilde{E}/\F_p$ be the
reduction modulo $p$ of any Weierstrass model of $E$ that is minimal at $p$. Then
\begin{equation}\label{eq.apbad}
a_p(E) \;=\; p + 1 - \#\tilde{E}(\F_p), \end{equation}
where $\#\tilde{E}(\F_p)$ counts all $\F_p$-points of the projective Weierstrass cubic,
including its singular point when $\tilde{E}$ is singular.
In particular, equation~\eqref{eq.apbad} is valid for the global minimal model of $E/\mathbb{Q}$.
\end{lem}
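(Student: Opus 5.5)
The plan is to split according to the reduction type of $E$ at $p$, using a model that is minimal at $p$.

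\emph{Good reduction.} If $p\nmid N(E)$, then $\tilde E$ is the nonsingular reduction, and \eqref{eq.apbad} is exactly the definition \eqref{eq.Frobenius}. There is one point to check: the count $\#\tilde E(\F_p)$ must not depend on which $p$-minimal model we reduce. Any two $p$-minimal models differ by a change of variables
\[
x=u^2x'+r,\qquad y=u^3y'+su^2x'+t
\]
with $u\in\Z_p^\times$ and $r,s,t\in\Z_p$. This transformation reduces to an isomorphism of projective cubics over $\F_p$, so it preserves point counts. The same observation settles the independence of the model in the bad cases as well.

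\emph{Bad reduction.} If $p\mid N(E)$, then $\tilde E$ is a singular cubic with a unique singular point $P_0\in\tilde E(\F_p)$, so
\[
\#\tilde E(\F_p)=\#\tilde E_{ns}(\F_p)+1.
\]
We use the standard description of the nonsingular locus \cite[III.2.5]{silverman}:
\begin{itemize}
\item for a cusp, $\tilde E_{ns}\cong\mathbb{G}_a$, giving $p$ points;
\item for a split node, $\tilde E_{ns}\cong\mathbb{G}_m$, giving $p-1$ points;
\item for a non-split node, $\tilde E_{ns}$ is the norm-one torus of $\F_{p^2}/\F_p$, giving $p+1$ points.
\end{itemize}
Adding back the singular point gives $\#\tilde E(\F_p)=p+1$, $p$, and $p+2$ respectively. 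Hence $p+1-\#\tilde E(\F_p)$ equals $0$, $1$, and $-1$ in the three cases. These are precisely the standard values of $a_p(E)$ for additive, split multiplicative and non-split multiplicative reduction \cite[C.16]{silverman}. This agrees with the description "$a_p(E)\in\{-1,0,1\}$ depending on reduction type" above, which the paper takes as the definition in the bad case.

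The only mildly delicate step is the claim that the group of nonsingular points has the stated number of elements. If one prefers to avoid citing \cite{silverman}, I would argue directly. Project from the singular point: lines through $P_0$ in $\mathbb{P}^2(\F_p)$ form a pencil of $p+1$ lines. Each line meets the cubic with multiplicity $\ge 2$ at $P_0$, so it meets it in exactly one further point, unless it is one of the tangent lines at $P_0$. The tangent lines behave as follows:
\begin{itemize}
\item at a cusp there is one tangent line, defined over $\F_p$;
\item at a split node there are two tangent lines, both defined over $\F_p$;
\item at a non-split node there are two conjugate tangent lines, neither defined over $\F_p$.
\end{itemize}
Subtracting the rational tangent lines from the $p+1$ lines in the pencil gives $p$, $p-1$ and $p+1$ nonsingular rational points respectively.

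Finally, the global minimal model is minimal at every prime $p$. So the last sentence of the lemma follows immediately from the preceding cases.
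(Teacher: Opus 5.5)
Your proof is correct and follows the same route as the paper's sketch: the good case is the definition, and in the bad case you add the rational singular point to the point count of $\tilde{E}_{ns}\cong\mathbb{G}_a$, $\mathbb{G}_m$, or the norm-one torus, giving $p+1$, $p$, $p+2$. Your extra remarks on independence of the $p$-minimal model and the pencil-of-lines count are sound; the one step you leave unjustified is that the singular point is $\F_p$-rational, which the paper deduces from its uniqueness, since a unique singular point is fixed by Frobenius.
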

\begin{proof}
This follows from \cite[Proposition 2.5]{silverman} and \cite[Appendix C.16]{silverman}. Here we give a sketch of a proof.
If $\tilde{E}$ is smooth this is the definition of the trace of Frobenius. Otherwise
$\tilde{E}$ has a unique singular point, which, being unique, is fixed by Frobenius and
hence $\F_p$-rational. Since the model is minimal at $p$, the smooth locus
$\tilde{E}^{\mathrm{ns}}$ is isomorphic to $\mathbb{G}_m$, the nonsplit one-dimensional
torus, or $\mathbb{G}_a$, depending on whether $E$ has split multiplicative, nonsplit
multiplicative, or additive reduction at $p$; these have $p-1$, $p+1$ and $p$ points
over $\F_p$ respectively. Adding the singular point gives
$\#\tilde{E}(\F_p)\in\{p,\,p+2,\,p+1\}$, i.e.\ $a_p(E)=p+1-\#\tilde{E}(\F_p)\in\{+1,\,-1,\,0\}$ in the three cases.
\end{proof}

\begin{lem}\label{lem.Nparity}
Let $E/\Q$ be an elliptic curve, let $p$ be a prime, and let $\tilde{E}/\F_p$ be the
reduction modulo $p$ of any Weierstrass model of $E$ that is minimal at $p$. Then
$p \mid N(E)$ if and only if $\tilde{E}$ is singular. In particular, $[N(E)]_2 = 1$ if
and only if the reduction modulo $2$ of a model of $E$ that is minimal at $2$ is smooth.
\end{lem}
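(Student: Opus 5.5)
The plan is to reduce the statement to the standard characterisation of bad reduction by the discriminant, and to check that $p$-minimality rules out any subtlety. First I fix a Weierstrass model of $E$ that is minimal at $p$. Its coefficients are $p$-integral, so its discriminant $\Delta$ lies in $\Z_{(p)}$. The reduction $\tilde{E}$ is the cubic over $\F_p$ whose coefficients are the reductions $\tilde w_i$. Its discriminant is the reduction $\tilde\Delta$, because $\Delta$ is an integer polynomial in $w_1,\dots,w_6$ (via $b_2,b_4,b_6,b_8$). The classical fact \cite[III.1.4]{silverman} says that a Weierstrass cubic over a field is nonsingular if and only if its discriminant is nonzero. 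Applying this over $\F_p$ shows that $\tilde{E}$ is singular if and only if $p\mid\Delta$, that is, $v_p(\Delta)>0$.

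Next I relate this to the conductor. By definition \cite[VII.5, VIII.11]{silverman}, $p$ divides $N(E)$ exactly when $E$ has bad reduction at $p$. Bad reduction at $p$ means that the reduction of a $p$-minimal model is singular. Since $p$-minimal models differ by changes of variables with $u\in\Z_{(p)}^\times$, the value $v_p(\Delta)$ is the same for all of them. Hence singularity of $\tilde E$ does not depend on which $p$-minimal model I choose, and the notion is well defined. Combining this with the first step gives $p\mid N(E)$ if and only if $\tilde E$ is singular.

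For the ``in particular'' clause I take $p=2$. Then $[N(E)]_2=1$ exactly when $N(E)$ is odd, i.e.\ $2\nmid N(E)$. By the first part this holds exactly when the reduction modulo $2$ of a $2$-minimal model is nonsingular, i.e.\ smooth. The global minimal model is minimal at every $p$, so it can be used directly.

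The only real obstacle is the role of minimality. For a non-minimal model, $p\mid\Delta$ can happen even when the reduction is good. So the argument has to use the model-independence of $v_p(\Delta)$ among $p$-minimal models, as explained above, and cannot use an arbitrary integral model. Everything else follows from the discriminant criterion for smoothness.
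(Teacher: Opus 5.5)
Your proof is correct and takes essentially the paper's approach. The paper simply cites Silverman (Appendix C.16), and you unpack that citation into three ingredients: the conductor is supported exactly on the primes of bad reduction; bad reduction means singular reduction of a $p$-minimal model; and the discriminant criterion \cite[III.1.4]{silverman}, which also shows the conclusion does not depend on which $p$-minimal model is chosen. Your appeal to changes of variables with $u\in\Z_{(p)}^\times$ is harmless but unnecessary, since all $p$-minimal models share the same $v_p(\Delta)$ by definition.
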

\begin{proof}
See \cite[Appendix C.16]{silverman}.
\end{proof}

\section{Proof and provenance of Theorem~\ref{cor.w123isogeny}}\label{s.proof}
In this section, we set up the brute force proof of Theorem~\ref{cor.w123isogeny} and also illustrate its provenance within machine learning. 
More precisely, we will discover perfectly accurate decision tree models which learn the reduced minimal Weierstrass coefficients $w_1,w_2,w_3$ for an elliptic curve from its Frobenius traces and conductor parity. 
Throughout this section, we will use the notation from equation~\eqref{eq.Weierstrass} and equation~\eqref{eq.Frobenius}.
All experiments presented in this section involve a database $\mathcal{E}_1$ consisting of all isogeny classes with conductor $\leq10^5$ that one can find from the LMFDB \cite{lmfdb}.
Implementations are available at \cite{L26}.

\subsection{Predicting $w_1$}\label{s.w1}

\begin{example}\label{ex.a2w1}
In Figure~\ref{fig:dt_w1_a2}, we exhibit a decision tree which predicts $w_1$ from $(a_p(E))_{p<100}$ with $100\%$ accuracy. 
We observe that this decision tree uses only the first Frobenius trace $a_2(E)$.
\begin{figure}
    \centering
    \footnotesize
    \begin{forest}
      label L/.style={
        edge label={node[midway,left,font=\scriptsize]{#1}}
      },
      label R/.style={
        edge label={node[midway,right,font=\scriptsize]{#1}}
      },
      for tree={
        forked edge,
        child anchor=north,
        for descendants={
          {edge=->}
        }
      },
      % [{$a_{2} \le 0.5$}, draw,
      %   [{$a_2 \le -0.5$}, draw, label L=Y,
      %       [{$a_2 \le -1.5$}, draw, label L=Y,
      %           [{$w_1 = 0$}, rectangle, thick, draw, label L=Y, tier=bottom, fill=gray!20]
      %           [{$w_1 = 1$}, rectangle, thick, draw, label R=N, tier=bottom, fill=gray!20]
      %       ]
      %       [{$w_1 = 0$}, rectangle, thick, draw, label R=N, tier=bottom, fill=gray!20]
      %   ]
      %   [{$a_2 \le 1.5$}, draw, label R=N,
      %       [{$w_1 = 1$}, rectangle, thick, draw, label L=Y, tier=bottom, fill=gray!20]
      %       [{$w_1 = 0$}, rectangle, thick, draw, label R=N, tier=bottom, fill=gray!20]
      %   ]
      % ]
        [{$a_{2} \le 0$}, draw,
            [{$a_2 \le -1$}, draw, label L=Y,
                [{$a_2 = -2$}, draw, label L=Y,
                    [{$0$}, rectangle, thick, draw, label L=Y, tier=bottom, fill=gray!20]
                    [{$1$}, rectangle, thick, draw, label R=N, tier=bottom, fill=gray!20]
                ]
                [{$0$}, rectangle, thick, draw, label R=N, tier=bottom, fill=gray!20]
            ]
            [{$a_2 \le 1$}, draw, label R=N,
                [{$1$}, rectangle, thick, draw, label L=Y, tier=bottom, fill=gray!20]
                [{$0$}, rectangle, thick, draw, label R=N, tier=bottom, fill=gray!20]
            ]
        ]
    \end{forest}
    \caption{A decision tree predicting $w_1$ using Frobenius traces $\{a_p(E)\}_{p < 100}$ achieving 100\% accuracy. It only uses $a_2(E)$ for predictions.}
    \label{fig:dt_w1_a2}
\end{figure}
\end{example}

We note that the decision tree in Figure~\ref{fig:dt_w1_a2} can also be used to determine the residue of $a_2(E)$ mod $2$.
With that in mind, we note the following.

\begin{thm}\label{lem.w1a2}
If $E$ is an elliptic curve over $\mathbb{Q}$, then the reduced minimal Weierstrass coefficient $w_1$ satisfies $w_1= [a_2(E)]_2$.
\end{thm}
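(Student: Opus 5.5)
We will use Lemma~\ref{lem.a2count} with $p=2$ applied to the reduced minimal model itself, which is minimal at $2$. This reduces the statement to a parity count. Since $a_2(E)=3-\#\tilde E(\F_2)$, we have $[a_2(E)]_2=1$ exactly when $\#\tilde E(\F_2)$ is even. So it suffices to show that $\#\tilde E(\F_2)$ is odd when $w_1=0$ and even when $w_1=1$.

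We count points on the projective cubic $\tilde E: y^2+w_1xy+w_3y=x^3+w_2x^2+w_4x+w_6$ over $\F_2$. This gives one point at infinity plus the affine solutions. For each $x\in\F_2$, the equation in $y$ reads $y^2+(w_1x+w_3)y = f(x)$, where $f(x)=x^3+w_2x^2+w_4x+w_6$. In characteristic $2$ there are two cases for this equation.
\begin{itemize}
\item If the coefficient $b=w_1x+w_3$ is $0$, the equation is $y^2=f(x)$. Since squaring is a bijection on $\F_2$, there is exactly one solution $y$.
\item If $b=1$, the equation is $y^2+y=f(x)$. The left side is $0$ for both $y\in\F_2$, so there are $2$ solutions when $f(x)=0$ and none otherwise.
\end{itemize}
In both cases the parity of the number of solutions equals $1-b$. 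The number of affine points is therefore congruent mod $2$ to
\[\sum_{x\in\F_2}\bigl(1+w_1x+w_3\bigr)=2+2w_3+w_1\equiv w_1 \pmod 2.\]
Adding the point at infinity gives $\#\tilde E(\F_2)\equiv 1+w_1 \pmod 2$, and hence $a_2(E)\equiv 3-1-w_1\equiv w_1 \pmod 2$. Because $w_1\in\{0,1\}$ by the normalisation \eqref{eq.reducedW}, this yields $w_1=[a_2(E)]_2$.

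The only point needing care is the bad-reduction case. There we must confirm that the singular point is already included in this count of all $\F_2$-points of the projective cubic, which matches the convention of Lemma~\ref{lem.a2count}. It is, because the count above runs over all solutions of the equation and never removes the singular one. The second thing to check is that the global minimal model is minimal at $2$, which holds since a global minimal model is minimal at every prime. Neither point is a genuine obstacle.
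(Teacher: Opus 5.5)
Your proof is correct, but it takes a different route from the paper's. Both arguments start from Lemma~\ref{lem.a2count} with $p=2$. The paper then enumerates all $32$ tuples $(\tilde w_1,\tilde w_2,\tilde w_3,\tilde w_4,\tilde w_6)\in\F_2^5$ in Table~\ref{tab:weierstrass_mod2}, computing $\#\tilde E(\F_2)$ and $a_2(E)$ for each and checking $\tilde w_1=[a_2(E)]_2$ row by row.

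You replace the enumeration with a uniform parity count. For each $x\in\F_2$, the number of solutions $y$ is congruent to $1+w_1x+w_3$ modulo $2$. In effect you are counting fixed points of the negation involution $y\mapsto y+w_1x+w_3$. This gives $\#\tilde E(\F_2)\equiv 1+w_1\pmod 2$ at once, independently of $w_2,w_4,w_6$. Your check that the count runs over all projective points, including any singular point, correctly matches the convention of the lemma.

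What each approach buys: your argument explains \emph{why} only $\tilde w_1$ matters and needs no table. The paper's enumeration gives more than parity, namely the exact value of $a_2(E)$ for each residue class. That is the information its brute-force proof of Theorem~\ref{thm.w3} relies on, since there the value $(a_2(E)+1)/2$, and not just its parity class, enters the formula.
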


\begin{proof}
    Write $E$ as in \Cref{eq.Weierstrass} and assume that this is the reduced minimal
    model (in particular, it satisfies \Cref{eq.reducedW} and is minimal at $2$). We
    reduce this model of $E$ modulo $2$:
    \[ \tilde{E} : y^2 + \tilde{w}_1xy + \tilde{w}_3y = x^3 + \tilde{w}_2x^2 + \tilde{w}_4x + \tilde{w}_6; \]
    we regard this as a curve over $\F_2$, and note here that we are not assuming that
    $E$ has good reduction at $2$, so $\tilde{E}$ may be singular. By the assumption
    that $E$ is reduced, we have $w_1 = \tilde{w}_1$ (identifying $\{0,1\} \subset \Z$
    with $\F_2$).

    Since the model is minimal at $2$, \Cref{lem.a2count} with $p=2$ gives
    \[ a_2(E) \;=\; 3 - \#\tilde{E}(\F_2), \]
    the count including the singular point of $\tilde{E}$ if there is one. In
    particular, $a_2(E)$ depends only on the tuple
    $(\tilde{w}_1, \tilde{w}_2, \tilde{w}_3, \tilde{w}_4, \tilde{w}_6) \in \F_2^5$ and
    not otherwise on $E$; this is what makes the following tabulation meaningful. In
    Table~\ref{tab:weierstrass_mod2} (Appendix~\ref{a.tables}) we run through all $32$
    possibilities for this tuple, recording in each case $\#\tilde{E}(\F_2)$ and hence
    $a_2(E)$. Inspecting the table, we see that $\tilde{w}_1 = [a_2(E)]_2$ in every
    case, and therefore $w_1 = [a_2(E)]_2$.
\end{proof}

\begin{rem}
The curve \texttt{11\!.\!a1} has good reduction at $2$ but also admits non-minimal model
$y^2 + 8y = x^3 - 4x^2 - 160x - 1280$. 
This non-minimal model reduces modulo $2$ to
$y^2 = x^3$, that is, the equation of row~1 in Table~\ref{tab:weierstrass_mod2}.
For \texttt{11\!.\!a1}, we have $a_2(E) = -2$, which is different to the value from row~1 in Table~\ref{tab:weierstrass_mod2}. 
The point is, the reduction type of
\texttt{11\!.\!a1} at $2$ can be read off from the special fibre only for a model that is minimal
at~$2$.
\end{rem}

In fact, it is possible to determine $w_1$ in purely geometric terms.

\begin{cor}\label{cor.w1redn}
If $E$ is an elliptic curve over $\mathbb{Q}$ in reduced minimal Weierstrass form, then
\[w_1=\begin{cases}0,&\text{ reduction at }2\text{ is additive or good supersingular},\\
1,&\text{ reduction at }2\text{ is multiplicative or good ordinary}.\end{cases}\]
\end{cor}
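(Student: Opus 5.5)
The plan is to combine Theorem~\ref{lem.w1a2}, which gives $w_1=[a_2(E)]_2$, with a case analysis of $a_2(E)$ according to the reduction type of $E$ at $2$. It therefore suffices to show that $a_2(E)$ is even exactly when the reduction at $2$ is additive or good supersingular, and odd exactly when it is multiplicative or good ordinary.

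For bad reduction I would use Lemma~\ref{lem.a2count} (or its proof). Additive reduction gives $a_2(E)=0$, which is even, so $w_1=0$. Split or non-split multiplicative reduction gives $a_2(E)=\pm1$, which is odd, so $w_1=1$.

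For good reduction I would use the standard characterisation that $\tilde E/\F_p$ is supersingular if and only if $p\mid a_p(E)$ (Silverman, Exercise~5.10 / Theorem~V.3.1). With $p=2$, supersingular means $a_2(E)$ is even, giving $w_1=0$, and ordinary means $a_2(E)$ is odd, giving $w_1=1$. This settles every case.

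An alternative argument that avoids citing this criterion stays in characteristic $2$. A smooth Weierstrass curve over $\F_2$ has $j$-invariant $\tilde w_1^{12}/\tilde\Delta$. It is supersingular if and only if $j=0$, that is, if and only if $\tilde w_1=0$. This gives the corollary directly from the reduced model, even without Theorem~\ref{lem.w1a2}, using minimality at $2$ so that $\tilde E$ is the reduction.

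The only step needing care is invoking the supersingularity criterion correctly. Hasse's bound $|a_2|\le 2$ also lets one check directly that $a_2\in\{0,\pm2\}$ are the supersingular traces over $\F_2$, or one can read this off Table~\ref{tab:weierstrass_mod2}, since smooth rows with $\tilde w_1=0$ are exactly those with $j=0$.
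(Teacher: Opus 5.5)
Your main argument is correct and follows the paper's intended route. The paper states this corollary without a separate proof, as an immediate consequence of Theorem~\ref{lem.w1a2}, and its proofs of Corollaries~\ref{cor.w2redn} and~\ref{c.w3} use exactly the facts you invoke: $a_p(E)=0$ for additive reduction, $a_p(E)=\pm1$ for multiplicative reduction, and good reduction is supersingular if and only if $p\mid a_p(E)$. One caveat on your alternative: the $j$-invariant computation only covers good reduction, so to avoid Theorem~\ref{lem.w1a2} entirely you would also need that a model minimal at $2$ with bad reduction is multiplicative (rather than additive) if and only if $2\nmid c_4$, where $c_4\equiv \tilde w_1^4 \pmod{2}$ (Silverman, Proposition~VII.5.1).
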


\begin{example}\label{ex.w1withouta2}
By way of contrast, in Table \ref{tab:dt_w1} we record the performance of decision trees to predict $w_1$ from $(a_{p}(E))_{3 \le p \le B}$, that is, without using $a_2(E)$ for $B \in \{10^2, 10^3, 10^4\}$. 
We observed accuracies of $51.03\%, 56.02\%$ and $62.39\%$ respectively, and that the resulting trees are highly complicated and hard to interpret.
\end{example}

\begin{table}[h]
    \centering
    \begin{tabular}{c|c|c|c}
    \toprule
    features & $p < 10^2$ & $p < 10^3$ & $p < 10^4$ \\
    \midrule
    all $a_p(E)$ & 100\% & 100\% & 100\% \\
    without $a_2(E)$ & 51.03\% & 56.02\% & 62.39\% \\
    \bottomrule
    \end{tabular}
    \caption{Performance of decision tree models on predicting $w_1$ in $\mathcal{E}_1$.}
    \label{tab:dt_w1}
\end{table}

If $w_1$ were to be uniformly distributed accross $\mathcal{E}_1$ then we would expect to see accuracy $1/2$, thereby explaining Table~\ref{tab:dt_w1}. 
By Theorem~\ref{cor.w123isogeny}, uniform distribution of $w_1$ is equivalent to that of $a_2(E) \bmod 2$. 
Empirically, we observe the following counts in $\mathcal{E}_1$ as in Table~\ref{tab.w1count} \cite{L26}.
On the other hand the model does slightly better than random. 
There is a superficial reason for this, namely that, on finite datasets with small conductors, we may have artificial correlations or small numbers phenomena that will fade in the limit $N(E) \to \infty$. 
There is also a deeper reason that may suggest we should expect higher accuracy. 
Indeed, it is known that one can determine $a_2(E)$ from a sufficiently large number of $a_p(E)$ for $p > 2$ using the functional equation. 
An effective algorithm to do so is described in \cite[Section~5]{Boo}, which also shows that $\{a_p(E)\}_{p > 3}$ and $N(E) \bmod 2$ effectively determine $w_1,w_2,w_3$. 
Furthermore, it has been observed that machine learning models can learn to predict $a_2(E) \bmod 2$ non-trivially from larger $a_p(E)$ \cite{BCC+}. 
In fact, by Theorem~\ref{cor.w123isogeny}, Table~\ref{tab:dt_w1} describes precisely the classification problem that was studied in \cite{BCC+}.

\begin{table}[h]
    \centering
\begin{tabular}{c|c}
\toprule
$\#\{E\in\mathcal{E}_1:w_1=0\}$ & $\#\{E\in\mathcal{E}_1:w_1=1\}$ \\
\midrule
219952 (50.3\%) & 217274 (49.7\%)\\
\bottomrule
\end{tabular}
\caption{Counting curves in $\mathcal{E}_1$ with each possible value for $w_1$.}
\label{tab.w1count}
\end{table}

\subsection{Predicting $w_2$}\label{s.w2}

\begin{example}\label{ex.a2a3w2}
In Figure~\ref{fig:dt_w2_a2_a3}, we exhibit a decision tree which predicts $w_2$ from $(a_2(E),a_3(E))$ with $100\%$ accuracy. 
\begin{figure}
    \centering
    \footnotesize
    \resizebox{\textwidth}{!}{%
    \begin{forest}
      label L/.style={
        edge label={node[midway,left,font=\scriptsize]{#1}}
      },
      label R/.style={
        edge label={node[midway,right,font=\scriptsize]{#1}}
      },
      for tree={
        forked edge,
        child anchor=north,
        for descendants={
          {edge=->}
        }
      },
      [{$a_{3} \le 0$}, draw,
          [{$a_{3} \le -1$}, draw, label L=Y,
              [{$a_{3} \le -2$}, draw, label L=Y,
                  [{$a_{3} \le -3$}, draw, label L=Y,
                      [{$a_{2} \le 0$}, draw, label L=Y,
                          [{$a_{2} \le -1$}, draw, label L=Y,
                              [{$a_{2} \le -2$}, draw, label L=Y,
                                  [{$0$}, rectangle, thick, draw, label L=Y, tier=bottom, fill=gray!20]
                                  [{$-1$}, rectangle, thick, draw, label R=N, tier=bottom, fill=gray!20]
                              ]
                              [{$0$}, rectangle, thick, draw, label R=N, tier=bottom, fill=gray!20]
                          ]
                          [{$a_{2} \le 1$}, draw, label R=N,
                              [{$-1$}, rectangle, thick, draw, label L=Y, tier=bottom, fill=gray!20]
                              [{$0$}, rectangle, thick, draw, label R=N, tier=bottom, fill=gray!20]
                          ]
                      ]
                      [{$a_{2} \le 0$}, draw, label R=N,
                          [{$a_{2} \le -1$}, draw, label L=Y,
                              [{$a_{2} \le -2$}, draw, label L=Y,
                                  [{$1$}, rectangle, thick, draw, label L=Y, tier=bottom, fill=gray!20]
                                  [{$0$}, rectangle, thick, draw, label R=N, tier=bottom, fill=gray!20]
                              ]
                              [{$1$}, rectangle, thick, draw, label R=N, tier=bottom, fill=gray!20]
                          ]
                          [{$a_{2} \le 1$}, draw, label R=N,
                              [{$0$}, rectangle, thick, draw, label L=Y, tier=bottom, fill=gray!20]
                              [{$1$}, rectangle, thick, draw, label R=N, tier=bottom, fill=gray!20]
                          ]
                      ]
                  ]
                  [{$a_{2} \le -1$}, draw, label R=N,
                      [{$a_{2} \le -2$}, draw, label L=Y,
                          [{$-1$}, rectangle, thick, draw, label L=Y, tier=bottom, fill=gray!20]
                          [{$1$}, rectangle, thick, draw, label R=N, tier=bottom, fill=gray!20]
                      ]
                      [{$a_{2} \le 0$}, draw, label R=N,
                          [{$-1$}, rectangle, thick, draw, label L=Y, tier=bottom, fill=gray!20]
                          [{$a_{2} \le 1$}, draw, label R=N,
                              [{$1$}, rectangle, thick, draw, label L=Y, tier=bottom, fill=gray!20]
                              [{$-1$}, rectangle, thick, draw, label R=N, tier=bottom, fill=gray!20]
                          ]
                      ]
                  ]
              ]
              [{$a_{2} \le -1$}, draw, label R=N,
                  [{$a_{2} \le -2$}, draw, label L=Y,
                      [{$0$}, rectangle, thick, draw, label L=Y, tier=bottom, fill=gray!20]
                      [{$-1$}, rectangle, thick, draw, label R=N, tier=bottom, fill=gray!20]
                  ]
                  [{$a_{2} \le 0$}, draw, label R=N,
                      [{$0$}, rectangle, thick, draw, label L=Y, tier=bottom, fill=gray!20]
                      [{$a_{2} \le 1$}, draw, label R=N,
                          [{$-1$}, rectangle, thick, draw, label L=Y, tier=bottom, fill=gray!20]
                          [{$0$}, rectangle, thick, draw, label R=N, tier=bottom, fill=gray!20]
                      ]
                  ]
              ]
          ]
          [{$a_{3} \le 1$}, draw, label R=N,
              [{$a_{2} \le 0$}, draw, label L=Y,
                  [{$a_{2} \le -1$}, draw, label L=Y,
                      [{$a_{2} \le -2$}, draw, label L=Y,
                          [{$1$}, rectangle, thick, draw, label L=Y, tier=bottom, fill=gray!20]
                          [{$0$}, rectangle, thick, draw, label R=N, tier=bottom, fill=gray!20]
                      ]
                      [{$1$}, rectangle, thick, draw, label R=N, tier=bottom, fill=gray!20]
                  ]
                  [{$a_{2} \le 1$}, draw, label R=N,
                      [{$0$}, rectangle, thick, draw, label L=Y, tier=bottom, fill=gray!20]
                      [{$1$}, rectangle, thick, draw, label R=N, tier=bottom, fill=gray!20]
                  ]
              ]
              [{$a_{3} \le 2$}, draw, label R=N,
                  [{$a_{2} \le -1$}, draw, label L=Y,
                      [{$a_{2} \le -2$}, draw, label L=Y,
                          [{$-1$}, rectangle, thick, draw, label L=Y, tier=bottom, fill=gray!20]
                          [{$1$}, rectangle, thick, draw, label R=N, tier=bottom, fill=gray!20]
                      ]
                      [{$a_{2} \le 0$}, draw, label R=N,
                          [{$-1$}, rectangle, thick, draw, label L=Y, tier=bottom, fill=gray!20]
                          [{$a_{2} \le 1$}, draw, label R=N,
                              [{$1$}, rectangle, thick, draw, label L=Y, tier=bottom, fill=gray!20]
                              [{$-1$}, rectangle, thick, draw, label R=N, tier=bottom, fill=gray!20]
                          ]
                      ]
                  ]
                  [{$a_{2} \le -1$}, draw, label R=N,
                      [{$a_{2} \le -2$}, draw, label L=Y,
                          [{$0$}, rectangle, thick, draw, label L=Y, tier=bottom, fill=gray!20]
                          [{$-1$}, rectangle, thick, draw, label R=N, tier=bottom, fill=gray!20]
                      ]
                      [{$a_{2} \le 0$}, draw, label R=N,
                          [{$0$}, rectangle, thick, draw, label L=Y, tier=bottom, fill=gray!20]
                          [{$a_{2} \le 1$}, draw, label R=N,
                              [{$-1$}, rectangle, thick, draw, label L=Y, tier=bottom, fill=gray!20]
                              [{$0$}, rectangle, thick, draw, label R=N, tier=bottom, fill=gray!20]
                          ]
                      ]
                  ]
              ]
          ]
      ]
    \end{forest}
    }
    \caption{A decision tree predicting $w_2$ using Frobenius traces $a_2(E)$ and $a_3(E)$ achieving 100\% accuracy. }
    \label{fig:dt_w2_a2_a3}
\end{figure}
Inspired by Theorem~\ref{lem.w1a2}, in Figure~\ref{fig:dt_a2mod2_a3mod3}, we exhibit a simpler decision tree which predicts $w_2$ from $([a_2(E)]_2,[a_3(E)]_3)$, again with $100\%$ accuracy. 
\begin{figure}
    \centering
    \footnotesize
    \begin{forest}
      label L/.style={
        edge label={node[midway,left,font=\scriptsize]{#1}}
      },
      label R/.style={
        edge label={node[midway,right,font=\scriptsize]{#1}}
      },
      for tree={
        forked edge,
        child anchor=north,
        for descendants={
          {edge=->}
        }
      },
      [{$\overline{a}_3 = 0$}, draw,
          [{$\overline{a}_2 = 0$}, draw, label L=Y,
              [{$0$}, rectangle, thick, draw, label L=Y, tier=bottom, fill=gray!20]
              [{$-1$}, rectangle, thick, draw, label R=N, tier=bottom, fill=gray!20]
          ]
          [{$\overline{a}_3 \le 1$}, draw, label R=N,
              [{$\overline{a}_2 = 0$}, draw, label L=Y,
                  [{$1$}, rectangle, thick, draw, label L=Y, tier=bottom, fill=gray!20]
                  [{$0$}, rectangle, thick, draw, label R=N, tier=bottom, fill=gray!20]
              ]
              [{$\overline{a}_2 = 0$}, draw, label R=N,
                  [{$-1$}, rectangle, thick, draw, label L=Y, tier=bottom, fill=gray!20]
                  [{$1$}, rectangle, thick, draw, label R=N, tier=bottom, fill=gray!20]
              ]
          ]
      ]
    \end{forest}
    \caption{A decision tree predicting $w_2$ from $\overline{a}_2 = [a_2(E)]_2 \in \{0, 1\}$ and $\overline{a}_3 = [a_3(E)]_3 \in \{0, 1, 2\}$ achieving 100\% accuracy.}
    \label{fig:dt_a2mod2_a3mod3}
\end{figure}
\end{example}

These decision tree models lead to the following theorem.

\begin{thm}\label{lem.formulaw2}
If $E$ is an elliptic curve over $\mathbb{Q}$, then the reduced minimal Weierstrass coefficient $w_2$ satisfies
\begin{equation}\label{eq.w2a3w1}
w_2
=[a_3(E)+1-w_1]_3-1.
\end{equation}
\end{thm}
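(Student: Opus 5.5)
The plan is to reduce the reduced minimal model modulo $3$ and to read off $a_3(E) \bmod 3$ as a Hasse-invariant-type quantity. I expect this quantity to involve only $w_1$ and $w_2$. Write $E$ in its reduced minimal form \eqref{eq.Weierstrass}. This model is minimal at $3$, so \Cref{lem.a2count} with $p=3$ gives $a_3(E) = 4 - \#\tilde E(\F_3)$, where $\tilde E$ is the (possibly singular) reduction modulo $3$ and the count includes the singular point. Since $2$ is invertible in $\F_3$, I will complete the square, replacing $y$ by $y-(\tilde w_1x+\tilde w_3)/2$. This is a bijection on $\F_3$-points and does not change the point at infinity, and it puts $\tilde E$ in the form $y^2=f(x)$ with
\[ f(x)=x^3+\tfrac{b_2}{4}x^2+\tfrac{b_4}{2}x+\tfrac{b_6}{4},\qquad b_2=w_1^2+4w_2 , \]
where $b_4,b_6$ are the usual combinations, which will not matter.

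Next I count points by the standard character sum. Let $\chi$ be the quadratic character of $\F_3$ with $\chi(0)=0$. Then $\#\tilde E(\F_3)=1+\sum_{x\in\F_3}(1+\chi(f(x)))=4+\sum_x\chi(f(x))$, and hence $a_3(E)=-\sum_x \chi(f(x))$. This holds whether or not $\tilde E$ is smooth, because it is just a count of solutions of $y^2=f(x)$, which is exactly what \Cref{lem.a2count} requires. By Euler's criterion $\chi(t)\equiv t^{(3-1)/2}=t \pmod 3$, so
\[ a_3(E)\equiv -\sum_{x\in\F_3} f(x) \pmod 3. \]
Now $\sum_{x\in\F_3}x^k$ equals $3\equiv 0$ for $k=0$, equals $0$ for $k=1$ and $k=3$, and equals $2\equiv-1$ for $k=2$. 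Therefore $a_3(E)\equiv b_2/4\equiv b_2 \pmod 3$, using $4\equiv1$.

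Finally, $b_2=w_1^2+4w_2\equiv w_1+w_2 \pmod 3$, because $w_1\in\{0,1\}$ gives $w_1^2=w_1$. Hence $w_2\equiv a_3(E)-w_1 \pmod 3$. Since $w_2\in\{-1,0,1\}$ by \eqref{eq.reducedW}, the integer $w_2+1\in\{0,1,2\}$ is the residue of $a_3(E)+1-w_1$ modulo $3$, which is exactly \eqref{eq.w2a3w1}.

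The only delicate point is the legitimacy of the point count at bad reduction. This is handled by \Cref{lem.a2count}, together with the observation that the reduced minimal model is integral and minimal at $3$. As a cross-check, and in the spirit of the brute-force proof of \Cref{lem.w1a2}, one could also tabulate all $3^5$ tuples in $\F_3^5$; I would include this only as verification.
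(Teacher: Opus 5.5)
Your proof is correct, and it takes a genuinely different route from the paper. The paper argues by exhaustion. It enumerates the $108$ residue classes modulo $3$ of a reduced minimal model (those with $\tilde w_1,\tilde w_3\in\{0,1\}$), computes $a_3(E)=4-\#\tilde E(\F_3)$ for each via Lemma~\ref{lem.a2count}, and checks \eqref{eq.w2a3w1} row by row in Table~\ref{tab:weierstrass_mod3}.

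You instead prove the uniform congruence $a_3(E)\equiv b_2 \pmod 3$ for any model minimal at $3$, using the character sum and $\chi(t)\equiv t \pmod 3$. This is the Hasse-invariant computation at $p=3$, and it remains valid at bad reduction precisely because Lemma~\ref{lem.a2count} counts the singular point. The normalisation \eqref{eq.reducedW} then enters only through $w_1^2=w_1$ and the choice of representatives $\{-1,0,1\}$ for $w_2$.

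Your route explains the shape of the formula: the $-w_1$ is just the $w_1^2$ term of $b_2$. It identifies $[a_3(E)]_3$ with $b_2 \bmod 3$, so at good reduction supersingularity at $3$ is exactly $3\mid b_2$, and it replaces $108$ checks by one line. The paper's tabulation, by contrast, is uniform across all three theorems. For $w_3$ it genuinely uses the exact value of $a_2(E)$ and the smooth/singular dichotomy, which a single congruence for $a_p$ does not see.

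One small caution on your proposed cross-check: restrict it to the $108$ normalised classes rather than all of $\F_3^5$. For $\tilde w_1=2$ one has $w_1^2\not\equiv w_1 \pmod 3$, so $a_3\equiv b_2\not\equiv w_1+w_2$ in every such class. That is harmless for your argument, but it shows the normalisation is used essentially, exactly once.
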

\begin{proof}
    Since $w_2\in\{-1,0,1\}$ is a complete set of residues modulo $3$, it is determined
    by its residue mod $3$; likewise $w_1\in\{0,1\}$ is determined by its residue.
    Write $E$ as in \Cref{eq.Weierstrass} and assume that this is the reduced minimal
    model (in particular, it satisfies \Cref{eq.reducedW} and is minimal at $3$). We
    reduce this model of $E$ modulo $3$:
    \[ \tilde{E} : y^2 + \tilde{w}_1xy + \tilde{w}_3y = x^3 + \tilde{w}_2x^2 + \tilde{w}_4x + \tilde{w}_6, \]
    a possibly singular Weierstrass cubic over $\F_3$. By \Cref{eq.reducedW} we have
    $\tilde{w}_1,\tilde{w}_3\in\{0,1\}\subset\F_3$, so $\tilde{E}$ lies in one of
    $2^2\cdot 3^3=108$ residue classes.

    Since the model is minimal at $3$, \Cref{lem.a2count} (applied with $p=3$) gives
    \[ a_3(E) \;=\; 4 - \#\tilde{E}(\F_3), \]
    the count including the singular point of $\tilde{E}$ if there is one. Consequently,
    both sides of equation~\eqref{eq.w2a3w1} depend only on the residue class of the
    reduced minimal model modulo $3$, and not otherwise on $E$. Again, we proceed by
    brute force. In Table~\ref{tab:weierstrass_mod3} (Appendix~\ref{a.tables}), we run
    through all $108$ residue classes, recording in each case the value
    $a_3(E)=4-\#\tilde{E}(\F_3)$ together with both sides of
    equation~\eqref{eq.w2a3w1}; they agree in every case.
\end{proof}

\begin{rem}\label{rem:w2chatgpt}
    The formula \eqref{eq.w2a3w1} was found with help of ChatGPT5.2-Pro. 
    To do so, we generated a look-up table for the mapping $(a_2(E), a_3(E)) \mapsto w_2$ based on the decision tree in Figure \ref{fig:dt_w2_a2_a3}, and asked ChatGPT5.2-Pro to find a simple formula describing the table.
\end{rem}

We may interpret this result geometrically as follows. By the \emph{local reduction data} of $E$ at a prime $p$ we mean the reduction type of $E$ at $p$, refined by the split/nonsplit distinction in the multiplicative case and by the ordinary/supersingular distinction in the good case.

\begin{cor}\label{cor.w2redn}
Let $E$ be an elliptic curve over $\mathbb{Q}$ written as a reduced minimal Weierstrass equation.
\begin{enumerate}
\item If $E$ has additive, multiplicative, or good supersingular reduction at $3$, then $w_2$ is determined by the local reduction data of $E$ at $2$ and $3$.
\item If $E$ has good ordinary reduction at $3$, then $w_2$ is determined by the local reduction data of $E$ at $2$ together with $a_3(E)$.
\end{enumerate}
\end{cor}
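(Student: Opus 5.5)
We will deduce Corollary~\ref{cor.w2redn} directly from Theorem~\ref{lem.formulaw2} together with Theorem~\ref{lem.w1a2} and Corollary~\ref{cor.w1redn}. The formula \eqref{eq.w2a3w1} expresses $w_2$ as a function of $w_1$ and $[a_3(E)]_3$. So it suffices to show two things: first, that $w_1$ is determined by the local reduction data at $2$; second, that $[a_3(E)]_3$ is determined by the local reduction data at $3$ in case (1), and trivially by $a_3(E)$ in case (2).

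The first input is exactly Corollary~\ref{cor.w1redn}. To make that corollary self-contained, we recall its derivation from Theorem~\ref{lem.w1a2}. For bad reduction at $2$, Lemma~\ref{lem.a2count} gives $a_2(E)\in\{1,-1,0\}$ according to split multiplicative, nonsplit multiplicative, or additive reduction. Hence $[a_2(E)]_2=1$ for multiplicative and $0$ for additive reduction. For good reduction at $2$, we use the standard fact that $E$ is supersingular at $p$ iff $p\mid a_p(E)$. Thus $[a_2(E)]_2=0$ exactly in the supersingular case. In every case $w_1$ is a function of the local data at $2$.

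For the prime $3$ we split into the cases of the statement.
\begin{itemize}
\item If the reduction is additive, Lemma~\ref{lem.a2count} gives $a_3(E)=0$.
\item If it is split multiplicative, $a_3(E)=1$; if nonsplit multiplicative, $a_3(E)=-1$.
\item If it is good supersingular, then $3\mid a_3(E)$, so $[a_3(E)]_3=0$. (By Hasse, $|a_3(E)|\le 2\sqrt3$, so in fact $a_3(E)\in\{-3,0,3\}$, but only the residue matters.)
\end{itemize}
In each subcase of (1), $[a_3(E)]_3$ is therefore a function of the local data at $3$. Combined with the previous paragraph and \eqref{eq.w2a3w1}, this gives (1). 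In case (2), the right-hand side of \eqref{eq.w2a3w1} depends only on $w_1$ and $a_3(E)$, which gives (2).

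The only step needing justification beyond the paper is the supersingularity criterion $p\mid a_p(E)$ for good reduction. We will cite it from \cite[V.4]{silverman}: for $p$ good, $\tilde E$ is supersingular iff $a_p\equiv 0 \pmod p$. Alternatively, for $p=2,3$ it can be checked from Tables~\ref{tab:weierstrass_mod2} and~\ref{tab:weierstrass_mod3} by computing the Hasse invariant of each smooth row. Beyond this, the argument is bookkeeping, and we do not expect any real obstacle.
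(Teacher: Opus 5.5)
Your proposal is correct and follows essentially the same route as the paper: it combines \eqref{eq.w2a3w1} with Corollary~\ref{cor.w1redn} for $w_1$, and reads off $[a_3(E)]_3$ as $0,1,2,0$ in the additive, split multiplicative, nonsplit multiplicative and good supersingular cases, and from $a_3(E)$ itself in the good ordinary case. Your extra derivation of Corollary~\ref{cor.w1redn} from Theorem~\ref{lem.w1a2} and your explicit appeal to the criterion that good reduction is supersingular exactly when $p\mid a_p(E)$ are useful supplements, since the paper uses that criterion without citation.
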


\begin{proof}
By equation~\eqref{eq.w2a3w1}, $w_2$ is determined by $w_1$ together with the residue $[a_3(E)]_3$, and $w_1$ is in turn determined by the local reduction data at $2$ (Corollary~\ref{cor.w1redn}). Since $a_3(E)$ equals $0$ for additive reduction and $1$ (resp.\ $-1$) for split (resp.\ nonsplit) multiplicative reduction, while good reduction at $3$ is ordinary precisely when $3\nmid a_3(E)$, the residue $[a_3(E)]_3$ equals $0$, $1$, $2$, $0$ in the additive, split multiplicative, nonsplit multiplicative and good supersingular cases at $3$ respectively, and is given by $a_3(E)$ itself in the good ordinary case.
\end{proof}

We note that the value of $a_p(E)$ mod $p$ determines which of the two Frobenius eigenvalues is the $p$-adic unit.
The following Example is inspired by Example~\ref{ex.w1withouta2}.

\begin{example}\label{ex.w2withouta2a3}
In Table~\ref{tab:dt_w2} we record the performance of decision trees to predict $w_2$ from vectors of the form $(a_{p}(E))_{3 \le p \le B}$, $(a_{p}(E))_{2 \le p \le B, p\neq3}$, and $(a_{p}(E))_{5 \le p \le B}$, that is, without using $a_2(E)$, $a_3(E)$, or both, for $B \in\{10^2, 10^3,10^4\}$. 
Without using $a_2(E)$ (resp. $a_3(E)$), we never achieve $>60\%$ (resp. $>40\%$) accuracy.
\end{example}

\begin{table}[h]
    \centering
    \begin{tabular}{c|c|c|c}
        \toprule
        features & $p < 10^2$ & $p < 10^3$ & $p < 10^4$ \\
        \midrule
        all $a_p(E)$ & 100\% & 100\% & 100\% \\
        without $a_2(E)$ & 50.92\% & 52.49\% & 54.27\% \\
        without $a_3(E)$ & 34.63\% & 36.20\% & 38.48\% \\
        without $a_2(E)$ and $a_3(E)$ & 33.66\% & 33.80\% & 33.73\% \\
        \bottomrule
    \end{tabular}
    \caption{Performance of decision tree models predicting $w_2$ in $\mathcal{E}_1$.}\label{tab:dt_w2}
\end{table}

If $w_2$ were to be uniformly distributed across $\mathcal{E}_1$ then we would expect to see accuracy $1/3$, thereby explaining Table~\ref{tab:dt_w2}. 
Empirically, we observe the following counts in $\mathcal{E}_1$ as in Table~\ref{tab.w2count} \cite{L26}.
As in the case of $w_1$, the model does slightly better than random, and that can be related to \cite{Boo} and \cite{BCC+}.

\begin{table}[h]
    \centering
\begin{tabular}{c|c|c}
\toprule
$\#\{E\in\mathcal{E}_1:w_2=-1\}$ & $\#\{E\in\mathcal{E}_1:w_2=0\}$ & $\#\{E\in\mathcal{E}_1:w_2=1\}$ \\
\midrule
154308 (35.3\%) & 154994 (35.4\%) & 127924 (29.3\%) \\
\bottomrule
\end{tabular}
\caption{Counting curves in $\mathcal{E}_1$ with each possible value for $w_2$.}
\label{tab.w2count}
\end{table}

\subsection{Predicting $w_3$}\label{s.w3}

% Preamble:
% \usepackage{forest}
% \useforestlibrary{edges}
% \usepackage{xcolor,graphicx}

\begin{figure}
    \centering
    \scriptsize
    % \resizebox{\textwidth}{!}{%
    \begin{forest}
      label L/.style={
        edge label={node[midway,left,font=\scriptsize]{#1}}
      },
      label R/.style={
        edge label={node[midway,right,font=\scriptsize]{#1}}
      },
      leaf/.style={
        rectangle,
        thick,
        draw,
        fill=gray!20,
        tier=bottom
      },
      for tree={
        forked edge,
        child anchor=north,
        draw,
        for descendants={
          {edge=->}
        }
      },
      % [{$\overline{N} \le 0.5$}
      %   [{$a_2 \le 0.5$}, label L=Y
      %     [{$a_2 \le -0.5$}, label L=Y
      %       [{$w_2 \le -0.5$}, label L=Y
      %         [{$w_3=0$}, leaf, label L=Y]
      %         [{$w_2 \le 0.5$}, label R=N
      %           [{$w_3=1$}, leaf, label L=Y]
      %           [{$w_3=0$}, leaf, label R=N]
      %         ]
      %       ]
      %       [{$w_3=0$}, leaf, label R=N]
      %     ]
      %     [{$w_2 \le -0.5$}, label R=N
      %       [{$w_3=1$}, leaf, label L=Y]
      %       [{$w_2 \le 0.5$}, label R=N
      %         [{$w_3=0$}, leaf, label L=Y]
      %         [{$w_3=1$}, leaf, label R=N]
      %       ]
      %     ]
      %   ]
      %   [{$a_2 \le 0.5$}, label R=N
      %     [{$a_2 \le -0.5$}, label L=Y
      %       [{$w_2 \le -0.5$}, label L=Y
      %         [{$w_3=1$}, leaf, label L=Y]
      %         [{$w_2 \le 0.5$}, label R=N
      %           [{$a_2 \le -1.5$}, label L=Y
      %             [{$w_3=1$}, leaf, label L=Y]
      %             [{$w_3=0$}, leaf, label R=N]
      %           ]
      %           [{$w_3=1$}, leaf, label R=N]
      %         ]
      %       ]
      %       [{$w_3=1$}, leaf, label R=N]
      %     ]
      %     [{$a_2 \le 1.5$}, label R=N
      %       [{$w_2 \le -0.5$}, label L=Y
      %         [{$w_3=0$}, leaf, label L=Y]
      %         [{$w_2 \le 0.5$}, label R=N
      %           [{$w_3=1$}, leaf, label L=Y]
      %           [{$w_3=0$}, leaf, label R=N]
      %         ]
      %       ]
      %       [{$w_3=1$}, leaf, label R=N]
      %     ]
      %   ]
      % ]
      [{$\overline{N} = 0$}
        [{$a_2 \le 0$}, label L=Y
          [{$a_2 \le -1$}, label L=Y
            [{$w_2 = -1$}, label L=Y
              [{$0$}, leaf, label L=Y]
              [{$w_2 \le 0$}, label R=N
                [{$1$}, leaf, label L=Y]
                [{$0$}, leaf, label R=N]
              ]
            ]
            [{$0$}, leaf, label R=N]
          ]
          [{$w_2 = -1$}, label R=N
            [{$1$}, leaf, label L=Y]
            [{$w_2 \le 0$}, label R=N
              [{$0$}, leaf, label L=Y]
              [{$1$}, leaf, label R=N]
            ]
          ]
        ]
        [{$a_2 \le 0$}, label R=N
          [{$a_2 \le -1$}, label L=Y
            [{$w_2 = -1$}, label L=Y
              [{$1$}, leaf, label L=Y]
              [{$w_2 \le 0$}, label R=N
                [{$a_2 = -2$}, label L=Y
                  [{$1$}, leaf, label L=Y]
                  [{$0$}, leaf, label R=N]
                ]
                [{$1$}, leaf, label R=N]
              ]
            ]
            [{$1$}, leaf, label R=N]
          ]
          [{$a_2 \le 1$}, label R=N
            [{$w_2 = -1$}, label L=Y
              [{$0$}, leaf, label L=Y]
              [{$w_2 \le 0$}, label R=N
                [{$1$}, leaf, label L=Y]
                [{$0$}, leaf, label R=N]
              ]
            ]
            [{$1$}, leaf, label R=N]
          ]
        ]
      ]
    \end{forest}%
    % }
    \caption{A decision tree predicting $w_3$ in $\mathcal{E}_1$ from $\overline{N} = [N(E)]_2$, $a_2(E)$, and $w_2$.}
    \label{fig:dt_nmod2_a2_w2}
\end{figure}

\begin{example}\label{ex.na2w2w3}
In Figure \ref{fig:dt_nmod2_a2_w2} we exhibit a decision tree which predicts $w_3$ from  $\overline{N} = [N(E)]_2 \in \{0,1\}$, $a_2(E)$, and $w_2$ with $100\%$ accuracy. 
\end{example}

\begin{thm}\label{thm.w3}
If $E$ is an elliptic curve over $\mathbb{Q}$, then the reduced minimal Weierstrass coefficient $w_3$ satisfies
\begin{equation}\label{eq:w3short}
    w_3 = (1 - [a_2(E)]_2) [N(E)]_2 + [a_2(E)]_2 \left[ N(E) + 1 + w_2 + \frac{a_2(E) + 1}{2} \right]_2.
\end{equation}
Equivalently, we have:
\begin{equation}\label{eq.w3}
w_3=\begin{cases}
[N(E)]_2 & \text{ if }\  a_2(E)\in\{-2,0,2\};\\ 
\left[N(E)+1 +w_2+\frac{a_2(E)+1}{2}\right]_2 &  \text{ if }\ a_2(E)\in\{-1,1\}.\end{cases}
\end{equation}
\end{thm}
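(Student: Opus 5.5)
The plan is to argue exactly as in the proofs of Theorems~\ref{lem.w1a2} and~\ref{lem.formulaw2}: reduce the reduced minimal model modulo $2$ and check the identity by a finite tabulation. Assume \eqref{eq.Weierstrass} is the reduced minimal model. It is minimal at $2$, so two earlier results apply to its reduction $\tilde{E}/\F_2$. By \Cref{lem.a2count}, $a_2(E)=3-\#\tilde{E}(\F_2)$. By \Cref{lem.Nparity}, $[N(E)]_2=1$ if and only if $\tilde{E}$ is smooth.

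Next I check that each quantity in \eqref{eq:w3short} depends only on the tuple $(\tilde w_1,\tilde w_2,\tilde w_3,\tilde w_4,\tilde w_6)\in\F_2^5$.
\begin{itemize}
\item By \eqref{eq.reducedW}, $w_1=\tilde w_1$ and $w_3=\tilde w_3$ under the identification $\{0,1\}=\F_2$.
\item The integer $w_2\in\{-1,0,1\}$ enters \eqref{eq:w3short} only inside $[\,\cdot\,]_2$, hence only through $\tilde w_2$.
\item $a_2(E)$ and $[N(E)]_2$ are functions of the tuple, by the two lemmas just cited.
\end{itemize}
So it suffices to check \eqref{eq:w3short}, equivalently \eqref{eq.w3}, on the $32$ rows of Table~\ref{tab:weierstrass_mod2}. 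For each row I would add a smoothness column, computed from $\Delta \bmod 2$ or directly from the partial derivatives. Then I compare $\tilde w_3$ with the right-hand side. The equivalence of \eqref{eq:w3short} and \eqref{eq.w3} is immediate from Theorem~\ref{lem.w1a2}: $[a_2(E)]_2=0$ exactly when $a_2(E)\in\{-2,0,2\}$, because Hasse's bound and \Cref{lem.a2count} give $|a_2(E)|\le 2$.

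Much of the table can also be explained conceptually, and I would give that explanation alongside the brute-force check.

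\emph{Case $\tilde w_1=0$.} The partial derivative in $y$ is $\tilde w_3$. If $\tilde w_3=0$, the curve $y^2=f(x)$ over $\F_2$ is always singular, since a singular point exists over $\overline{\F}_2$ where $f'(x)=y^2$... and by the uniqueness argument in \Cref{lem.a2count} it is $\F_2$-rational. If $\tilde w_3=1$, the partial derivative in $y$ never vanishes, so the curve is smooth. Hence $w_3=[N(E)]_2$ in this case.

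\emph{Case $\tilde w_1=1$.} Here I count points directly:
\begin{itemize}
\item at $x=0$, the equation is $y^2+\tilde w_3y=\tilde w_6$;
\item at $x=1$, it is $y^2+(1+\tilde w_3)y=1+\tilde w_2+\tilde w_4+\tilde w_6$;
\item plus the point at infinity.
\end{itemize}
Over $\F_2$, the equation $y^2+y=c$ has $2$ solutions if $c=0$ and none if $c=1$, while $y^2=c$ has exactly one solution. So exactly one of the two fibres has the shape $y^2=c$. Hence $\#\tilde E(\F_2)\in\{2,4\}$, i.e. $a_2(E)=\pm1$, consistent with Theorem~\ref{lem.w1a2}. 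The sign of $a_2(E)$ is governed by the parity of $\tilde w_6$ when $\tilde w_3=0$, and by that of $1+\tilde w_2+\tilde w_4+\tilde w_6$ when $\tilde w_3=1$. Smoothness must then be related to $\tilde w_4,\tilde w_6$ via $\Delta\bmod 2$. Combining these relations, the combination $\tilde w_3+[N]_2+1+\tilde w_2+(a_2+1)/2$ should vanish identically.

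The main obstacle is this second case. The parity relation among $\tilde w_2,\tilde w_3,\tilde w_4,\tilde w_6$, the point count and smoothness is not transparent, because the singularity criterion when $\tilde w_1=1$ involves $\tilde w_4$ and $\tilde w_6$ nontrivially. I would first try to derive $\Delta\bmod 2$ in closed form for $a_1=1$ and show that it matches the expression above. If that becomes messy, I would fall back on the $16$-row check with $\tilde w_1=1$ in Table~\ref{tab:weierstrass_mod2}, which constitutes a complete proof.

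Finally, since $w_1$, $w_2$, $a_2(E)$, $a_3(E)$ and $[N(E)]_2$ are all isogeny invariants, the formula also shows that $w_3$ depends only on the isogeny class.
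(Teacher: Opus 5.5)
Your proposal is correct and is essentially the paper's proof. Like the paper, you reduce the reduced minimal model modulo $2$, observe that $w_3=\tilde w_3$, $a_2(E)$ (Lemma~\ref{lem.a2count}), $[N(E)]_2$ (Lemma~\ref{lem.Nparity}) and $[w_2]_2=\tilde w_2$ depend only on the residue class, and check the $16+16$ classes split by $\tilde w_1$ (Tables~\ref{tab:weierstrass_mod2b} and~\ref{tab:weierstrass_mod2c}).

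One slip in your optional conceptual sketch for $\tilde w_1=1$: you have the two cases swapped. The sign of $a_2(E)$ is governed by $1+\tilde w_2+\tilde w_4+\tilde w_6$ when $\tilde w_3=0$, and by $\tilde w_6$ when $\tilde w_3=1$. With that corrected, $\Delta\equiv\tilde w_6+\tilde w_4+\tilde w_3(\tilde w_2+\tilde w_4)\pmod{2}$ gives $\tfrac{a_2(E)+1}{2}\equiv 1+\tilde w_2+\tilde w_4+\tilde w_6$ and $[N(E)]_2\equiv\tilde w_4+\tilde w_6$ when $\tilde w_3=0$, and $\tfrac{a_2(E)+1}{2}\equiv\tilde w_6$ and $[N(E)]_2\equiv\tilde w_2+\tilde w_6$ when $\tilde w_3=1$. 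So your ``main obstacle'' is settled in two lines.
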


\begin{proof}
    Since $w_3\in\{0,1\}$, it is determined by its residue mod $2$. Write $E$ as in
    \Cref{eq.Weierstrass} and assume that this is the reduced minimal model (in
    particular, it satisfies \Cref{eq.reducedW} and is minimal at $2$), and let
    $\tilde{E}/\F_2$ be its reduction modulo $2$, as in the proof of \Cref{lem.w1a2}.

    We claim that both sides of equation~\eqref{eq.w3} depend only on the residue class
    of the reduced minimal model modulo $2$, and not otherwise on $E$. Indeed:
    $w_3 = \tilde{w}_3$ since $w_3 \in \{0,1\}$; the value
    $a_2(E) = 3 - \#\tilde{E}(\F_2)$ is given by \Cref{lem.a2count} (applied with
    $p=2$), which in particular determines the case distinction in
    equation~\eqref{eq.w3}; the residue $[N(E)]_2$ is given by \Cref{lem.Nparity}; and
    while $w_2$ itself is not determined by $\tilde{w}_2$ (since $\pm 1$ have the same
    reduction), it enters equation~\eqref{eq.w3} only through its residue
    $w_2 \equiv \tilde{w}_2 \pmod 2$.

    By \Cref{lem.w1a2}, $a_2(E)$ is even (resp.\ odd) precisely when $\tilde{w}_1 = 0$
    (resp.\ $\tilde{w}_1 = 1$), so each case of equation~\eqref{eq.w3} concerns exactly
    $16$ residue classes. We check these in Tables~\ref{tab:weierstrass_mod2b} and
    \ref{tab:weierstrass_mod2c} respectively, recording in each row the right-hand side
    of equation~\eqref{eq.w3}, computed via \Cref{lem.a2count} and \Cref{lem.Nparity}; it equals
    $w_3$ in every case.
\end{proof}

\begin{rem}\label{r.llm}
Formula \eqref{eq.w3} was discovered empirically in conversation with Anthropic's Claude language model. Motivated by Theorems~\ref{lem.w1a2} and~\ref{lem.formulaw2}, we tabulated the triples $(w_1,w_2,w_3)$ together with the local reduction data of $E$ at $2$ and $3$ for all $657{,}396$ elliptic curves of conductor less than $100{,}000$ in the LMFDB. The resulting data showed that the reduction types at $2$ and $3$ (refined by the split/nonsplit and ordinary/supersingular distinctions) determine $(w_1,w_2,w_3)$, except that one must also specify $a_2(E)$ when $E$ has good ordinary reduction at $2$, and $a_3(E)$ when $E$ has good ordinary reduction at $3$. This reflects the fact that, at a good ordinary prime $p$, the residue class $a_p(E)\bmod p$ determines which Frobenius eigenvalue is the $p$-adic unit.
The case distinction in \eqref{eq.w3} is therefore explained by parity: $a_2(E)$ is odd precisely when $E$ has good ordinary or multiplicative reduction at $2$, and even precisely when it has good supersingular or additive reduction. The explicit formula was obtained by fitting the tabulated data, verified on the full dataset, and only subsequently proved.
Note that equation~\eqref{eq.w3} is also discoverable from the decision tree in Figure \ref{fig:dt_nmod2_a2_w2}. 
When $a_2(E)$ is even, Figure \ref{fig:dt_a2mod2_a3mod3} shows that $w_3$ is equal to $[N(E)]_2$. 
When $a_2(E) \in \{\pm 1\}$, $w_3$ depends on $a_2(E)$ and $[w_2]_2$, and the second part of equation \eqref{eq.w3} can be found. 
\end{rem}

\begin{cor}\label{c.w3}
Let $E$ be an elliptic curve over $\mathbb{Q}$ written as a reduced minimal Weierstrass equation.
\begin{enumerate}
\item If $E$ has additive or good supersingular reduction at $2$, then $w_3$
is determined by the reduction type at $2$ alone; explicitly, $w_3=0$ in the
additive case and $w_3=1$ in the good supersingular case.
\item If $E$ has multiplicative reduction at $2$, then $w_3$ is determined by
the local reduction data at $2$ together with: the local reduction data at
$3$, if $E$ does not have good ordinary reduction at $3$; or $a_3(E)$, if it
does.
\item If $E$ has good ordinary reduction at $2$, then $w_3$ is determined by
$a_2(E)$ together with: the local reduction data at $3$, if $E$ does not have
good ordinary reduction at $3$; or $a_3(E)$, if it does.
\end{enumerate}
\end{cor}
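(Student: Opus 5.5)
The plan is to read everything off the explicit formula \eqref{eq.w3} of Theorem~\ref{thm.w3}, combined with Lemma~\ref{lem.Nparity} for the conductor parity and Corollary~\ref{cor.w2redn} for $w_2$. Two facts will be used throughout. First, $[N(E)]_2$ is $0$ exactly when $E$ has bad (additive or multiplicative) reduction at $2$, and $1$ when it has good reduction at $2$. Second, $a_2(E)$ satisfies $|a_2(E)|\le 2$: by Hasse's bound when the reduction is good, and by Lemma~\ref{lem.a2count} when it is bad.

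\textbf{Case (1).} If the reduction at $2$ is additive, then $a_2(E)=0$. If it is good supersingular, then $a_2(E)$ is even, since supersingularity at $2$ means $2\mid a_2(E)$. In both situations we are in the first branch of \eqref{eq.w3}, so $w_3=[N(E)]_2$. By Lemma~\ref{lem.Nparity} this equals $0$ in the additive case and $1$ in the good supersingular case, which gives the explicit values claimed.

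\textbf{Case (2).} If the reduction at $2$ is multiplicative, Lemma~\ref{lem.a2count} gives $a_2(E)=1$ in the split case and $a_2(E)=-1$ in the nonsplit case. So $a_2(E)$, and hence $(a_2(E)+1)/2$, is fixed by the local reduction data at $2$. Moreover $[N(E)]_2=0$. The second branch of \eqref{eq.w3} then reads
\[
w_3=\left[1+w_2+\tfrac{a_2(E)+1}{2}\right]_2 .
\]
It remains to see that $w_2$ is determined by the stated data. This follows from Corollary~\ref{cor.w2redn}: $w_2$ is determined by the local reduction data at $2$ and $3$ when the reduction at $3$ is not good ordinary, and by the local reduction data at $2$ together with $a_3(E)$ when it is. (Only the parity of $w_2$ is actually needed.)

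\textbf{Case (3).} If the reduction at $2$ is good ordinary, then $a_2(E)$ is odd, and the bound $|a_2(E)|\le 2$ forces $a_2(E)=\pm1$. We also have $[N(E)]_2=1$, and Corollary~\ref{cor.w1redn} gives $w_1=1$. The second branch of \eqref{eq.w3} then gives
\[
w_3=\left[w_2+\tfrac{a_2(E)+1}{2}\right]_2 ,
\]
which depends only on $a_2(E)$ and $w_2$. By Theorem~\ref{lem.formulaw2}, $w_2=[a_3(E)]_3-1$ since $w_1=1$. As in the proof of Corollary~\ref{cor.w2redn}, $[a_3(E)]_3$ is fixed by the local reduction data at $3$ unless the reduction at $3$ is good ordinary, in which case it is read off from $a_3(E)$.

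There is no real obstacle, since each step is a direct substitution into \eqref{eq.w3}. The only point needing care is in case~(3): the local reduction data at $2$ (namely ``good ordinary'') does not fix the sign of $a_2(E)$. That is exactly why the statement requires $a_2(E)$ itself there, whereas in case~(2) the split/nonsplit distinction already supplies it.
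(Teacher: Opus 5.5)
Your proposal is correct and follows essentially the same route as the paper: substitute into \eqref{eq.w3}, read off $[N(E)]_2$ from the reduction type via Lemma~\ref{lem.Nparity}, pin down $a_2(E)$ using the Hasse bound and Lemma~\ref{lem.a2count}, and use Theorem~\ref{lem.formulaw2} (or Corollary~\ref{cor.w2redn}) to see that $w_2$ is controlled by the data at $3$. The only cosmetic difference is in how $w_2$ is handled: the paper treats cases (2) and (3) together by noting that $w_2$ is even exactly when $w_2=0$, i.e.\ when $a_3(E)\equiv 1 \pmod 3$, whereas in case (2) you invoke the full determination of $w_2$ from Corollary~\ref{cor.w2redn}.
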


\begin{proof}
Recall that $a_p(E)$ equals $0$ for additive reduction and $1$ (resp.\ $-1$)
for split (resp.\ nonsplit) multiplicative reduction, and that good reduction
at $p$ is ordinary precisely when $p\nmid a_p(E)$.

(1) Here $a_2(E)$ is even, so equation~\eqref{eq.w3} gives $w_3=[N(E)]_2$,
which is $0$ in the additive case ($2\mid N(E)$) and $1$ in the good
supersingular case ($2\nmid N(E)$).

(2) and (3): Here $a_2(E)$ is odd, so $a_2(E)\in\{-1,1\}$ by the Hasse bound,
and $w_1=1$ by Theorem~\ref{lem.w1a2}; equation~\eqref{eq.w3} thus reads
\[
w_3=\Big[N(E)+1+w_2+\tfrac{a_2(E)+1}{2}\Big]_2 ,
\]
and it remains to account for the three inputs on the right. First,
$[N(E)]_2$ equals $0$ in case (2) and $1$ in case (3). Second, $a_2(E)$ is
the split/nonsplit datum at $2$ in case (2), and is given in case (3).
Third, since $w_2\in\{-1,0,1\}$, the coefficient $w_2$ is even if and only if
$w_2=0$, which by equation~\eqref{eq.w2a3w1} amounts to
$a_3(E)\equiv1\pmod{3}$; whether this holds is determined by the local
reduction data at $3$ when the reduction there is additive, multiplicative or
good supersingular ($a_3(E)$ being $0$, $\pm1$, or divisible by $3$,
respectively), and by $a_3(E)$ itself when it is good ordinary.
\end{proof}

\begin{rem}
We present a somewhat less verbose version of Corollary~\ref{c.w3}.
\begin{itemize}
\item In the case that $E$ has bad reduction at $2$:
\begin{itemize}
\item If $a_2(E)$ is even, then $w_3=0$.
\item If $a_2(E)$ is odd, then
\[w_3=\begin{cases}1-([w_2]_2),&a_2(E)=-1,\\ [w_2]_2,&a_2=1.\end{cases}\]
\end{itemize}
\item In the case that $E$ has good reduction at $2$:
\begin{itemize}
\item If $a_2(E)$ is even, then $w_3=1$.
\item If $a_2(E)$ is odd, then
\[w_3=\begin{cases}1-([w_2]_2),&a_2(E)=-1,\\ [w_2]_2,&a_2=1.\end{cases}\]
\end{itemize}
\end{itemize}
\end{rem}

\begin{example}
In Table~\ref{tab:dtw3}, we record the best accuracy achieved by a decision tree trained to predict $w_3$ from $(a_p(E))_{p<B}$, for $B\in\{10^2,10^3,10^4\}$.
In Table~\ref{tab.w3count}, we record the counts for each value of $w_3$ in $\mathcal{E}_1$.
\end{example}

\begin{table}[h]
    \centering
    \begin{tabular}{c|c|c|c}
        \toprule
        features & $p < 10^2$ & $p < 10^3$ & $p < 10^4$ \\
        \midrule
        all $a_p(E)$ & 73.71\% & 75.36\% & 76.68\% \\
        \bottomrule
    \end{tabular}
        \caption{Performance of decision tree models predicting $w_3$ in $\mathcal{E}_1$.}\label{tab:dtw3}
\end{table}

\begin{table}[h]
    \centering
\begin{tabular}{c|c}
\toprule
$\#\{E\in\mathcal{E}_1:w_3=0\}$ & $\#\{E\in\mathcal{E}_1:w_3=1\}$ \\
\midrule
283609 (64.9\%) & 153617 (35.1\%)\\
\bottomrule
\end{tabular}
\caption{Counting curves in $\mathcal{E}_1$ with each possible value for $w_3$.}
\label{tab.w3count}
\end{table}

\appendix

\section{Tables}\label{a.tables}

In this appendix, we include the tables required for the brute force proofs of Theorems~\ref{lem.w1a2},~\ref{lem.formulaw2}, and~\ref{thm.w3}.
For implementations, see \cite{L26}.

\setcounter{rownum}{0}

\begin{longtable}{|r|p{6cm}|r|r|r|r|}
\caption{For each of the $32$ residue classes modulo $2$ of a reduced minimal
Weierstrass equation~\eqref{eq.Weierstrass}, we list the number of $\F_2$-points of the
reduction $\tilde{E}$ (counting the singular point when $\tilde{E}$ is singular), the
value $a_2(E) = 3 - \#\tilde{E}(\F_2)$ given by \Cref{lem.a2count}, its residue
$[a_2(E)]_2$, and the coefficient $w_1 = \tilde{w}_1$. In the first column, we simply
list the row number.}\label{tab:weierstrass_mod2}\\
\hline
& Equation~\eqref{eq.Weierstrass} mod $2$&$\#\tilde{E}(\F_2)$&$a_2(E)$&$[a_2(E)]_2$ &$w_1$\\
\hline
\endfirsthead
\multicolumn{6}{c}%
{\tablename\ \thetable\ -- \textit{Continued from previous page}} \\
\hline
& Equation~\eqref{eq.Weierstrass} mod $2$&$\#\tilde{E}(\F_2)$&$a_2(E)$&$[a_2(E)]_2$ &$w_1$\\
\hline
\endhead
\hline \multicolumn{6}{r}{\textit{Continued on next page}} \\
\endfoot
\hline
\endlastfoot
\rownumber &$y^2=x^3$&$3$&$0$&$0$&$0$\\
\rownumber &$y^2=x^3+1$&$3$&$0$&$0$&$0$\\
\rownumber &$y^2=x^3+x$&$3$&$0$&$0$&$0$\\
\rownumber &$y^2=x^3+x+1$&$3$&$0$&$0$&$0$\\
\rownumber &$y^2+y=x^3$&$3$&$0$&$0$&$0$\\
\rownumber &$y^2+y=x^3+1$&$3$&$0$&$0$&$0$\\
\rownumber &$y^2+y=x^3+x$&$5$&$-2$&$0$&$0$\\
\rownumber &$y^2+y=x^3+x+1$&$1$&$2$&$0$&$0$\\
\rownumber &$y^2=x^3+x^2$&$3$&$0$&$0$&$0$\\
\rownumber &$y^2=x^3+x^2+1$&$3$&$0$&$0$&$0$\\
\rownumber &$y^2=x^3+x^2+x$&$3$&$0$&$0$&$0$\\
\rownumber &$y^2=x^3+x^2+x+1$&$3$&$0$&$0$&$0$\\
\rownumber &$y^2+y=x^3+x^2$&$5$&$-2$&$0$&$0$\\
\rownumber &$y^2+y=x^3+x^2+1$&$1$&$2$&$0$&$0$\\
\rownumber &$y^2+y=x^3+x^2+x$&$3$&$0$&$0$&$0$\\
\rownumber &$y^2+y=x^3+x^2+x+1$&$3$&$0$&$0$&$0$\\
\rownumber &$y^2+xy=x^3$&$2$&$1$&$1$&$1$\\
\rownumber &$y^2+xy=x^3+1$&$4$&$-1$&$1$&$1$\\
\rownumber &$y^2+xy=x^3+x$&$4$&$-1$&$1$&$1$\\
\rownumber &$y^2+xy=x^3+x+1$&$2$&$1$&$1$&$1$\\
\rownumber &$y^2+xy+y=x^3$&$4$&$-1$&$1$&$1$\\
\rownumber &$y^2+xy+y=x^3+1$&$2$&$1$&$1$&$1$\\
\rownumber &$y^2+xy+y=x^3+x$&$4$&$-1$&$1$&$1$\\
\rownumber &$y^2+xy+y=x^3+x+1$&$2$&$1$&$1$&$1$\\
\rownumber &$y^2+xy=x^3+x^2$&$4$&$-1$&$1$&$1$\\
\rownumber &$y^2+xy=x^3+x^2+1$&$2$&$1$&$1$&$1$\\
\rownumber &$y^2+xy=x^3+x^2+x$&$2$&$1$&$1$&$1$\\
\rownumber &$y^2+xy=x^3+x^2+x+1$&$4$&$-1$&$1$&$1$\\
\rownumber &$y^2+xy+y=x^3+x^2$&$4$&$-1$&$1$&$1$\\
\rownumber &$y^2+xy+y=x^3+x^2+1$&$2$&$1$&$1$&$1$\\
\rownumber &$y^2+xy+y=x^3+x^2+x$&$4$&$-1$&$1$&$1$\\
\rownumber &$y^2+xy+y=x^3+x^2+x+1$&$2$&$1$&$1$&$1$\\
\hline
\end{longtable}

\setcounter{rownum}{0}

\begin{longtable}{|r|p{6cm}|r|r|r|}
\caption{For each of the $108$ residue classes modulo $3$ of a reduced minimal
Weierstrass equation~\eqref{eq.Weierstrass} (namely, those with
$\tilde{w}_1,\tilde{w}_3\in\{0,1\}$), we list the value $a_3(E)=4-\#\tilde{E}(\F_3)$
given by \Cref{lem.a2count}, the count including the singular point when $\tilde{E}$
is singular; the value of the right-hand side of equation~\eqref{eq.w2a3w1}; and the
coefficient $w_2$, recovered from $\tilde{w}_2$ as its representative in $\{-1,0,1\}$.
In the first column, we simply list the row number.}\label{tab:weierstrass_mod3}\\

\hline
&Equation~\eqref{eq.Weierstrass} mod 3  &  $a_3(E)$ & RHS of~\eqref{eq.w2a3w1} & $w_2$ \\
\hline
\endfirsthead

\multicolumn{5}{c}%
{\tablename\ \thetable\ -- \textit{Continued from previous page}} \\
\hline
&Equation~\eqref{eq.Weierstrass} mod 3  &  $a_3(E)$ & RHS of~\eqref{eq.w2a3w1} & $w_2$ \\
\hline
\endhead

\hline \multicolumn{5}{r}{\textit{Continued on next page}} \\
\endfoot

\hline
\endlastfoot

\rownumber &$y^2 = x^3$  & 0 & 0 & 0 \\

\rownumber &$y^2 = x^3 + 1$  & 0 & 0 & 0 \\
\rownumber &$y^2 = x^3 + 2$  & 0 & 0 & 0 \\

\rownumber &$y^2 = x^3 + x$  & 0 & 0 & 0 \\
\rownumber &$y^2 = x^3 + x + 1$  & 0 & 0 & 0 \\
\rownumber &$y^2 = x^3 + x + 2$  & 0 & 0 & 0 \\
\rownumber &$y^2 = x^3 + 2x$  & 0 & 0 & 0 \\
\rownumber &$y^2 = x^3 + 2x + 1$  &$-3$  & 0 & 0 \\
\rownumber &$y^2 = x^3 + 2x + 2$  & 3 & 0 & 0 \\

\rownumber &$y^2 + y = x^3$  & 0 & 0 & 0 \\
\rownumber &$y^2 + y = x^3 + 1$  & 0 & 0 & 0 \\
\rownumber &$y^2 + y = x^3 + 2$  & 0 & 0 & 0 \\
\rownumber &$y^2 + y = x^3 + x$  & 0 & 0 & 0 \\
\rownumber &$y^2 + y = x^3 + x + 1$  & 0 & 0 & 0 \\
\rownumber &$y^2 + y = x^3 + x + 2$  & 0 & 0 & 0 \\
\rownumber &$y^2 + y = x^3 + 2x$  & $-3$ & 0 & 0 \\
\rownumber &$y^2 + y = x^3 + 2x + 1$  & 3 & 0 & 0 \\
\rownumber &$y^2 + y = x^3 + 2x + 2$  & 0 & 0 & 0 \\

\rownumber &$y^2 = x^3 + x^2$  & 1 & 1 & 1 \\
\rownumber &$y^2 = x^3 + x^2 + 1$  & $-2$ & 1 & 1 \\
\rownumber &$y^2 = x^3 + x^2 + 2$  & 1 & 1 & 1 \\
\rownumber &$y^2 = x^3 + x^2 + x$ & 1& 1 & 1\\
\rownumber &$y^2 = x^3 + x^2 + x + 1$ & $-2$ &1&1\\    
\rownumber &$y^2 = x^3 + x^2 + x + 2$&1&1&1\\
\rownumber &$y^2 = x^3 + x^2 + 2x$&$-2$&1&1\\
\rownumber &$y^2 = x^3 + x^2 + 2x + 1$&1&1&1\\ 
\rownumber &$y^2 = x^3 + x^2 + 2x + 2$&1&1&1\\ 

\rownumber &$y^2 + y = x^3 + x^2$  & $-2$ & 1 & 1 \\
\rownumber &$y^2 + y = x^3 + x^2 + 1$  & 1 & 1 & 1 \\
\rownumber &$y^2 + y = x^3 + x^2 + 2$  & 1 & 1 & 1 \\
\rownumber &$y^2 + y = x^3 + x^2 + x$  & $-2$ & 1 & 1 \\
\rownumber &$y^2 + y = x^3 + x^2 + x + 1$  & 1 & 1 & 1 \\
\rownumber &$y^2 + y = x^3 + x^2 + x + 2$  & 1 & 1 & 1 \\
\rownumber &$y^2 + y = x^3 + x^2 + 2x$  & 1 & 1 & 1 \\
\rownumber &$y^2 + y = x^3 + x^2 + 2x + 1$  & 1 & 1 & 1 \\
\rownumber &$y^2 + y = x^3 + x^2 + 2x + 2$  &$-2$  & 1 & 1 \\

\rownumber &$y^2 = x^3 - x^2$  & $-1$ & $-1$ &$-1$  \\
\rownumber &$y^2 = x^3 - x^2 + 1$  &  $-1$ & $-1$ &$-1$ \\
\rownumber &$y^2 = x^3 - x^2 + 2$  & 2 &  $-1$ &$-1$ \\
\rownumber &$y^2 = x^3 - x^2 + x$  &  $-1$ & $-1$ &$-1$ \\
\rownumber &$y^2 = x^3 - x^2 + x + 1$  &  $-1$ & $-1$ &$-1$ \\
\rownumber &$y^2 = x^3 - x^2 + x + 2$  & 2 &  $-1$ &$-1$ \\
\rownumber &$y^2 = x^3 - x^2 + 2x$  & 2 &  $-1$ &$-1$ \\
\rownumber &$y^2 = x^3 - x^2 + 2x + 1$  &  $-1$ & $-1$ &$-1$ \\
\rownumber &$y^2 = x^3 - x^2 + 2x + 2$  &  $-1$ & $-1$ &$-1$ \\
\rownumber &$y^2 + y = x^3 - x^2$  &  $-1$ & $-1$ &$-1$  \\
\rownumber &$y^2 + y = x^3 - x^2 + 1$  & 2 &  $-1$ &$-1$  \\
\rownumber &$y^2 + y = x^3 - x^2 + 2$  &  $-1$ & $-1$ &$-1$ \\
\rownumber &$y^2 + y = x^3 - x^2 + x$  &  $-1$ & $-1$ &$-1$ \\
\rownumber &$y^2 + y = x^3 - x^2 + x + 1$  & 2 &  $-1$ &$-1$ \\
\rownumber &$y^2 + y = x^3 - x^2 + x + 2$  &  $-1$ & $-1$ &$-1$ \\
\rownumber &$y^2 + y = x^3 - x^2 + 2x$  &  $-1$ & $-1$ &$-1$ \\
\rownumber &$y^2 + y = x^3 - x^2 + 2x + 1$  &  $-1$ & $-1$ &$-1$ \\
\rownumber &$y^2 + y = x^3 - x^2 + 2x + 2$  & 2 &  $-1$ &$-1$ \\

\rownumber &$y^2 + xy = x^3$  & 1 & 0 & 0 \\
\rownumber &$y^2 + xy = x^3 + 1$  & $-2$ & 0 & 0 \\
\rownumber &$y^2 + xy = x^3 + 2$  & 1 & 0 & 0 \\
\rownumber &$y^2 + xy = x^3 + x$  & 1 & 0 & 0 \\
\rownumber &$y^2 + xy = x^3 + x + 1$  &$-2$  & 0 & 0 \\
\rownumber &$y^2 + xy = x^3 + x + 2$  & 1 & 0 & 0 \\
\rownumber &$y^2 + xy = x^3 + 2x$  & $-2$ & 0 & 0 \\
\rownumber &$y^2 + xy = x^3 + 2x + 1$  & 1 & 0 & 0 \\
\rownumber &$y^2 + xy = x^3 + 2x + 2$  & 1 & 0 & 0 \\
\rownumber &$y^2 + xy + y = x^3$  & 1 & 0 & 0 \\
\rownumber &$y^2 + xy + y = x^3 + 1$  & 1 & 0 & 0 \\
\rownumber &$y^2 + xy + y = x^3 + 2$  & $-2$ & 0 & 0 \\
\rownumber &$y^2 + xy + y = x^3 + x$  & $-2$ & 0 & 0 \\
\rownumber &$y^2 + xy + y = x^3 + x + 1$  & 1 & 0 & 0 \\
\rownumber &$y^2 + xy + y = x^3 + x + 2$  & 1 & 0 & 0 \\
\rownumber &$y^2 + xy + y = x^3 + 2x$  & $-2$ & 0 & 0 \\
\rownumber &$y^2 + xy + y = x^3 + 2x + 1$  & 1 & 0 & 0 \\
\rownumber &$y^2 + xy + y = x^3 + 2x + 2$  & 1 & 0 & 0 \\
\rownumber &$y^2 + xy = x^3 + x^2$  & $-1$ & 1 & 1 \\
\rownumber &$y^2 + xy = x^3 + x^2 + 1$  & $-1$ & 1 & 1 \\
\rownumber &$y^2 + xy = x^3 + x^2 + 2$  & 2 & 1 & 1 \\
\rownumber &$y^2 + xy = x^3 + x^2 + x$  & $-1$ & 1 & 1 \\
\rownumber &$y^2 + xy = x^3 + x^2 + x + 1$  & $-1$ & 1 & 1 \\
\rownumber &$y^2 + xy = x^3 + x^2 + x + 2$  & 2 & 1 & 1 \\
\rownumber &$y^2 + xy = x^3 + x^2 + 2x$  & 2 & 1 & 1 \\
\rownumber &$y^2 + xy = x^3 + x^2 + 2x + 1$  & $-1$ & 1 & 1 \\
\rownumber &$y^2 + xy = x^3 + x^2 + 2x + 2$  & $-1$ & 1 & 1 \\
\rownumber &$y^2 + xy + y = x^3 + x^2$  & $-1$ & 1 & 1 \\
\rownumber &$y^2 + xy + y = x^3 + x^2 + 1$  & $-1$ & 1 & 1 \\
\rownumber &$y^2 + xy + y = x^3 + x^2 + 2$  & 2 & 1 & 1 \\
\rownumber &$y^2 + xy + y = x^3 + x^2 + x$  & $-1$ & 1 & 1 \\
\rownumber &$y^2 + xy + y = x^3 + x^2 + x + 1$  & 2 & 1 & 1 \\
\rownumber &$y^2 + xy + y = x^3 + x^2 + x + 2$  & $-1$ & 1 & 1 \\
\rownumber &$y^2 + xy + y = x^3 + x^2 + 2x$  & $-1$ & 1 & 1 \\
\rownumber &$y^2 + xy + y = x^3 + x^2 + 2x + 1$  & 2 & 1 & 1 \\
\rownumber &$y^2 + xy + y = x^3 + x^2 + 2x + 2$  & $-1$ & 1 & 1 \\
\rownumber &$y^2 + xy = x^3 - x^2$  & 0 & $-1$ & $-1$ \\
\rownumber &$y^2 + xy = x^3 - x^2 + 1$  & 0 &  $-1$ & $-1$ \\
\rownumber &$y^2 + xy = x^3 - x^2 + 2$  & 0 &  $-1$ & $-1$ \\
\rownumber &$y^2 + xy = x^3 - x^2 + x$  & 0 &  $-1$ & $-1$ \\
\rownumber &$y^2 + xy = x^3 - x^2 + x + 1$  & 0 &  $-1$ & $-1$ \\
\rownumber &$y^2 + xy = x^3 - x^2 + x + 2$  & 0 &  $-1$ & $-1$ \\
\rownumber &$y^2 + xy = x^3 - x^2 + 2x$  & 0 &  $-1$ & $-1$ \\
\rownumber &$y^2 + xy = x^3 - x^2 + 2x + 1$  &$-3$  &  $-1$ & $-1$ \\
\rownumber &$y^2 + xy = x^3 - x^2 + 2x + 2$  & 3 &  $-1$ & $-1$ \\
\rownumber &$y^2 + xy + y = x^3 - x^2$  & $-3$ &  $-1$ & $-1$ \\
\rownumber &$y^2 + xy + y = x^3 - x^2 + 1$  & 3 &  $-1$ & $-1$ \\
\rownumber &$y^2 + xy + y = x^3 - x^2 + 2$  & 0 &  $-1$ & $-1$\\
\rownumber &$y^2 + xy + y = x^3 - x^2 + x$  & 0 &  $-1$ & $-1$ \\
\rownumber &$y^2 + xy + y = x^3 - x^2 + x + 1$  & 0 &  $-1$ & $-1$ \\
\rownumber &$y^2 + xy + y = x^3 - x^2 + x + 2$  & 0 &  $-1$ & $-1$ \\
\rownumber &$y^2 + xy + y = x^3 - x^2 + 2x$  & 0 &  $-1$ & $-1$ \\
\rownumber &$y^2 + xy + y = x^3 - x^2 + 2x + 1$  & 0 &  $-1$ & $-1$ \\
\rownumber &$y^2 + xy + y = x^3 - x^2 + 2x + 2$  & 0 &  $-1$ & $-1$ \\

\hline
\end{longtable}

\setcounter{rownum}{0}
\begin{longtable}{|r|p{6cm}|r|r|}
\caption{Verification of the first case of equation~\eqref{eq.w3}, for the $16$ residue classes modulo $2$ of a reduced minimal Weierstrass equation~\eqref{eq.Weierstrass} with $a_2(E)$ even, equivalently with $\tilde{w}_1 = 0$ (\Cref{lem.w1a2}). The third column lists $[N(E)]_2$, which by \Cref{lem.Nparity} equals $1$ if the curve in the second column is smooth and $0$ otherwise. In the first column, we simply list the row number.}\label{tab:weierstrass_mod2b}\\
\hline
& Equation~\eqref{eq.Weierstrass} mod $2$&$[N(E)]_2$&$w_3$\\
\hline
\endfirsthead
\multicolumn{4}{c}%
{\tablename\ \thetable\ -- \textit{Continued from previous page}} \\
\hline
& Equation~\eqref{eq.Weierstrass} mod $2$&$[N(E)]_2$&$w_3$\\
\hline
\endhead
\hline \multicolumn{4}{r}{\textit{Continued on next page}} \\
\endfoot
\hline
\endlastfoot
\rownumber &$y^2=x^3$&0&0\\
\rownumber &$y^2=x^3+1$&0&0\\
\rownumber &$y^2=x^3+x$&0&0\\
\rownumber &$y^2=x^3+x+1$&0&0\\
\rownumber &$y^2+y=x^3$&1&1\\
\rownumber &$y^2+y=x^3+1$&1&1\\
\rownumber &$y^2+y=x^3+x$&1&1\\
\rownumber &$y^2+y=x^3+x+1$&1&1\\
\rownumber &$y^2=x^3+x^2$&0&0\\
\rownumber &$y^2=x^3+x^2+1$&0&0\\
\rownumber &$y^2=x^3+x^2+x$&0&0\\
\rownumber &$y^2=x^3+x^2+x+1$&0&0\\
\rownumber &$y^2+y=x^3+x^2$&1&1\\
\rownumber &$y^2+y=x^3+x^2+1$&1&1\\
\rownumber &$y^2+y=x^3+x^2+x$&1&1\\
\rownumber &$y^2+y=x^3+x^2+x+1$&1&1\\
\hline
\end{longtable}

\setcounter{rownum}{0}
\begin{longtable}{|r|p{6cm}|r|r|}
\caption{Verification of the second case of equation~\eqref{eq.w3}, for the $16$ residue classes modulo $2$ of a reduced minimal Weierstrass equation~\eqref{eq.Weierstrass} with $a_2(E)$ odd, equivalently with $\tilde{w}_1 = 1$ (\Cref{lem.w1a2}). The third column lists the right-hand side of equation~\eqref{eq.w3}, computed via $a_2(E) = 3 - \#\tilde{E}(\F_2)$ (\Cref{lem.a2count}), $[N(E)]_2$ (\Cref{lem.Nparity}), and $w_2 \equiv \tilde{w}_2 \pmod 2$. In the first column, we simply list the row number.}\label{tab:weierstrass_mod2c}\\
\hline
& Equation~\eqref{eq.Weierstrass} mod $2$&$\left[N(E)+1+w_2+\frac{a_2(E)+1}{2}\right]_2$&$w_3$\\
\hline
\endfirsthead
\multicolumn{4}{c}%
{\tablename\ \thetable\ -- \textit{Continued from previous page}} \\
\hline
& Equation~\eqref{eq.Weierstrass} mod $2$&$\left[N(E)+1+w_2+\frac{a_2(E)+1}{2}\right]_2$&$w_3$\\
\hline
\endhead
\hline \multicolumn{4}{r}{\textit{Continued on next page}} \\
\endfoot
\hline
\endlastfoot
\rownumber &$y^2+xy=x^3$&0&0\\
\rownumber &$y^2+xy=x^3+1$&0&0\\
\rownumber &$y^2+xy=x^3+x$&0&0\\
\rownumber &$y^2+xy=x^3+x+1$&0&0\\
\rownumber &$y^2+xy+y=x^3$&1&1\\
\rownumber &$y^2+xy+y=x^3+1$&1&1\\
\rownumber &$y^2+xy+y=x^3+x$&1&1\\
\rownumber &$y^2+xy+y=x^3+x+1$&1&1\\
\rownumber &$y^2+xy=x^3+x^2$&0&0\\
\rownumber &$y^2+xy=x^3+x^2+1$&0&0\\
\rownumber &$y^2+xy=x^3+x^2+x$&0&0\\
\rownumber &$y^2+xy=x^3+x^2+x+1$&0&0\\
\rownumber &$y^2+xy+y=x^3+x^2$&1&1\\
\rownumber &$y^2+xy+y=x^3+x^2+1$&1&1\\
\rownumber &$y^2+xy+y=x^3+x^2+x$&1&1\\
\rownumber &$y^2+xy+y=x^3+x^2+x+1$&1&1\\
\hline
\end{longtable}

\section{Transformers}\label{a.transformers}
A decision tree is a classical, interpretable model based on recursive partitioning.
On the other hand, a transformer is a deep learning architecture that captures complex patterns via attention mechanisms. 
In this section, we use a dataset $\mathcal{E}_2$ consisting of all isogeny classes of conductor $\leq5\times10^5$ from \cite{lmfdb}.
This dataset contains approximately $2.1$ million isogeny classes.
We implement an encoder--decoder transformer, closely following the \texttt{Int2Int} framework \cite{int2int}; complete code and configuration files are available at \cite{H26}.
% \beb More details needed! Some pictures from the embeddings and attention will be added here.
% \eb For more details about \texttt{Int2Int}, see \beb [reference].\eb

\subsection{Architecture and data representation}
The model and training set-up are deliberately minimal. It implements an encoder--decoder transformer with a single encoder layer, a single decoder layer, and one attention head. The model dimension is $d_{\mathrm{model}}=128$ and the feed-forward dimension is $256$, with no dropout. Positional information is supplied by learnable positional embeddings.

Each integer is written in base $b=30$ and encoded as a sign token ($+$ or $-$) followed by its base-$30$ digits; together with the special tokens $\langle\textsc{bos}\rangle,\langle\textsc{eos}\rangle,\langle\textsc{pad}\rangle,\langle\textsc{sep}\rangle,\langle\textsc{unk}\rangle$ this yields a vocabulary of size $37$. A sequence of integers is tokenised by concatenating these encodings between a leading $\langle\textsc{bos}\rangle$ and a trailing $\langle\textsc{eos}\rangle$.

The source sequence is the vector of Frobenius traces $(a_p(E))_{p<100}$, i.e.\ the $25$ traces $a_2(E),a_3(E),\dots,a_{97}$. For the experiments predicting $w_3$ we additionally append the conductor $N(E)$ to the source sequence, motivated by equation~\eqref{eq.w3} and Figure~\ref{fig:dt_nmod2_a2_w2}, in which $w_3$ depends on $N\bmod 2$. The target sequence is the tuple of reduced minimal Weierstrass coefficients to be predicted, e.g.\ $(w_1,w_2,w_3)$.

We optimise the token-level cross-entropy of the shifted decoder output using Adam (learning rate $10^{-4}$, weight decay $10^{-2}$) with batch size $32$, splitting $\mathcal{E}_2$ into $80\%$ training and $20\%$ validation data. 
We report the proportion of validation classes for which the \emph{entire} predicted tuple of integers is correct.

\subsection{Experimental results}
We first produce, in the transformer setting, the picture obtained from decision trees in Section~\ref{s.proof}.
Predicting the map
\[(a_p(E))_{p<100}\longmapsto(w_1,w_2)\]
the model attains $100\%$ validation accuracy, in agreement with the fact that $(w_1,w_2)$ is determined by $(a_2(E),a_3(E))$ (Theorems~\ref{lem.w1a2} and~\ref{lem.formulaw2}).

We next consider the full map
\[\big((a_p(E))_{p<100},\,N\big)\longmapsto(w_1,w_2,w_3),\]
which is the transformer analogue of the $w_3$ experiments of Section~\ref{s.w3}.
Table~\ref{tab:transformer_w123} reports the per-coefficient validation accuracy, together with the majority-class baseline for each coefficient.
The coefficient $w_1$ is recovered exactly, consistent with $w_1\equiv a_2(E)\bmod 2$ (Theorem~\ref{lem.w1a2}); $w_2$ is recovered with high but imperfect accuracy; and $w_3$ is the clear bottleneck, with the joint accuracy on $(w_1,w_2,w_3)$ governed almost entirely by the errors in $w_3$.

\begin{table}[h]
    \centering
    \begin{tabular}{c|c|c|c|c}
    \toprule
     & $w_1$ & $w_2$ & $w_3$ & joint $(w_1,w_2,w_3)$ \\
    \midrule
    transformer & 100\% & 89.2\% & 80.5\% & 72.6\% \\
    \bottomrule
    \end{tabular}
    \caption{Per-coefficient and joint validation accuracy of the transformer predicting $(w_1,w_2,w_3)$ from $\big((a_p(E))_{p<100},N\big)$ on $\mathcal{E}_2$.}
    \label{tab:transformer_w123}
\end{table}

The $w_3$ accuracy of $80.5\%$ exceeds the best decision-tree accuracy of $76.68\%$ obtained from $(a_p(E))_{p<10^4}$ alone (Table~\ref{tab:dtw3}); we attribute the improvement to the inclusion of the conductor $N$ in the input, in line with the appearance of $N\bmod 2$ in equation~\eqref{eq.w3}.
Nevertheless, the transformer falls well short of the $100\%$ accuracy that the closed-form expression of Theorem~\ref{thm.w3} guarantees once the correct features $(a_2(E),w_2,N\bmod 2)$ are available, indicating that the model has not, on its own, discovered the exact arithmetic rule for $w_3$.

\bibliographystyle{amsalpha}
\bibliography{references}

\end{document}